\providecommand{\U}[1]{\protect\rule{.1in}{.1in}}
\theoremstyle{plain}
\newtheorem{lemma}{Lemma}
\newtheorem{proposition}{Proposition}
\newtheorem{remark}{Remark}
\newtheorem{theorem}{Theorem}
\numberwithin{equation}{section}
\newcommand{\eps}{\epsilon}
\def\Xint#1{\mathchoice
{\XXint\displaystyle\textstyle{#1}}%
{\XXint\textstyle\scriptstyle{#1}}%
{\XXint\scriptstyle\scriptscriptstyle{#1}}%
{\XXint\scriptscriptstyle\scriptscriptstyle{#1}}%
\!\int}
\def\XXint#1#2#3{{\setbox0=\hbox{$#1{#2#3}{\int}$ }
\vcenter{\hbox{$#2#3$ }}\kern-.6\wd0}}
\def\dashint{\Xint-}
\begin{document}
\title[Interior nodal sets of Steklov eigenfunctions on surfaces]{Interior nodal sets of Steklov
eigenfunctions on surfaces}
\author{ Jiuyi Zhu}
\address{
 Department of Mathematics\\
Johns Hopkins University\\
Baltimore, MD 21218, USA\\
Emails: jzhu43@math.jhu.edu}
\thanks{\noindent  Research is partially
supported by the NSF grant DMS 1500468. }
\date{}
\subjclass{35P20, 35P15, 58C40, 28A78. } \keywords {Nodal sets,
Upper bound, Steklov eigenfunctions. } \dedicatory{}

\begin{abstract}
We investigate the interior nodal sets $\mathcal{N}_\lambda$ of
Steklov eigenfunctions on connected and compact surfaces with
boundary. The optimal vanishing order of Steklov eigenfunctions  is
shown be $C\lambda$. The singular sets  $\mathcal{S}_\lambda$ are
finite points on the nodal sets. We are able to prove that the
Hausdorff measure $H^0(\mathcal{S}_\lambda)\leq C\lambda^2$.
Furthermore, we obtain an upper bound for the measure of interior
nodal sets $H^1(\mathcal{N}_\lambda)\leq C\lambda^{\frac{3}{2}}$.
Here those positive constants $C$ depend only on the surfaces.
\end{abstract}

\maketitle
\section{Introduction}
Let $(\mathcal{M}, g)$ be a smooth, connected and compact surface
with smooth boundary $\partial \mathcal{M}$. The
 main goal of this paper is to obtain an upper bound of interior nodal
sets
$$\mathcal{N}_\lambda=\{ z\in\mathcal{M}|e_\lambda=0\}$$
for Steklov eigenfunctions
\begin{equation}
\left\{
\begin{array}{lll}
\triangle_g e_\lambda=0,
 \quad z\in\mathcal{M},
\medskip \\
\frac{\partial e_\lambda}{\partial \nu}(z)=\lambda e_\lambda(z),
\quad z\in
\partial\mathcal{M},
\end{array}
\right. \label{Stek}
\end{equation}
where $\nu$ is a unit outward normal on $\partial\mathcal{M}$.
The Steklov eigenfunctions were introduced by Steklov in 1902 for
bounded domains in the plane. It interprets the steady state
temperature distribution in the domain such that the heat flux on
the  boundary is proportional to the temperature. It is also found
applications in a quite few physical fields, such as fluid
mechanics, electromagnetism, elasticity, etc.
 Especially, the model (\ref{Stek}) was studied by Calder\'{o}n
\cite{C} as it can be regarded as eigenfunctions of the
Dirichlet-to-Neumann map. The interior nodal sets of Steklov
eigenfunctions represent the stationary points in $\mathcal{M}$. In
the context of quantum mechanics, nodal sets are the sets where a
free particle is least likely to be found.

It is well-known that the spectrum $\lambda_j$ of Steklov eigenvalue
problem is discrete
 with $$0=\lambda_0<\lambda_1\leq
\lambda_2\leq \lambda_3,\cdots, \
 \ \mbox{and} \ \ \lim_{j\to\infty}\lambda_j=\infty.$$
There exists an orthonormal basis $\{ e_{\lambda_j}\}$ of
eigenfunctions such that
$$ e_{\lambda_j}\in C^\infty(\mathcal{M}), \quad
\int_{\partial\mathcal{M}} e_{\lambda_j} e_{\lambda_k}\, dV_{
{g}}=\delta_{j}^{k}.
$$
Estimating the Hausdorff measure of nodal sets has always been an
important subject concerning the study of eigenfunctions. This
subject centers around the famous Yau's conjecture. Recently, much
work has been devoted to the bounds of nodal sets
$$ Z_\lambda=\{z\in\partial\mathcal{M}| e_\lambda(z)=0\}      $$
of Steklov eigenfunctions on the  boundary.  Bellova and Lin
\cite{BL} proved the $H^{m-1}(Z_\lambda)\leq C\lambda^6$ with $C$
depending only on $\mathcal{M}$, if $\mathcal{M}$ is a $m+1$
dimensional analytic manifold. Zelditch \cite{Z1} improved their
results and gave the optimal upper bound $H^{m-1}(Z_\lambda)\leq
C\lambda$ for analytic manifolds using microlocal analysis. For the
smooth manifold $\mathcal{M}$, by assuming  that $0$ is a regular
value, Wang and the author in \cite{WZ} recently established a lower
bound
$$H^{m-1}(Z_\lambda)\geq C\lambda^{\frac{3-m}{2}}.$$

Before presenting our results for interior nodal sets, let's briefly
review the literature about the nodal sets of classical
eigenfunctions. Interested reader may refer to the book \cite{HL}
and survey \cite{Z} for detailed account about this subject. Let
$e_\lambda$ be $L^2$ normalized eigenfunctions of Laplacian-Beltrami
operator on compact manifolds $(\mathcal{M}, g)$ without boundary,
\begin{equation} -\triangle_g e_\lambda=\lambda^2 e_\lambda.
\label{class}\end{equation} Yau's conjecture states that for any
smooth manifold, one should control the upper and lower bound of
nodal sets of classical eigenfunctions as
\begin{equation}c\lambda\leq H^{n-1}(
\mathcal{N}_\lambda)\leq C\lambda \label{yau}
\end{equation} where $C, c$
depends only on the manifold $\mathcal{M}$. The conjecture is only
verified for real analytic manifolds by Donnelly-Fefferman in
\cite{DF}. Lin \cite{Lin} also showed the upper bound for the analytic manifolds by a different approach.  For the smooth manifolds, the conjecture is still not
settled. For the lower bound of nodal sets with $n\geq 3$, Colding
and Minicozzi \cite{CM}, Sogge and Zelditch \cite{SZ}, \cite{SZ1}
independently obtained that
$$ H^{n-1}(\mathcal{N}_\lambda)\geq C\lambda^{\frac{3-n}{2}}      $$
for smooth manifolds. See also \cite{HSo} for deriving the same
bound by adapting the idea in \cite{SZ}. For the upper bound, Hardt
and Simon \cite{HS} gave an exponential upper bound
$$ H^{n-1}(\mathcal{N}_\lambda)\leq C e^{\lambda \ln \lambda}.      $$
In surfaces, better results have been obtain.
 Br\"uning \cite{Br} and Yau (unpublished) derived the same lower bound
 as (\ref{yau}). The best estimate to date for the upper bound
 is
$$ H^{1}(\mathcal{N}_\lambda)\leq C\lambda^{\frac{3}{2}}         $$
by Donnelly-Fefferman \cite{DF2} and Dong \cite{D} using different
methods.

Let us return to Steklov eigenvalue problem (\ref{Stek}). By the
maximum principle, there exist nodal sets in the manifold
$\mathcal{M}$ and those sets must intersect the boundary
$\partial\mathcal{M}$. Thus, it is natural to study the size of
interior nodal sets in $\mathcal{M}$. We can also ask Yau's type
questions about the Hausdorff measure of nodal sets. The natural and
corresponding conjecture for Steklov eigenfunctions should states
exactly the same as (\ref{yau}). See also the open questions in the
survey by Girouard and Polterovich in \cite{GP}. Recently, Sogge,
Wang and the author \cite{SWZ} obtained a lower bound for interior
nodal sets
$$H^{n-1}({\mathcal{N}}_\lambda)\geq
C\lambda^{\frac{2-n}{2}}$$ for $n$-dimensional manifold
$\mathcal{M}$. Very recently, Polterovich, Sher and Toth \cite{PST}
can verify Yau's type conjecture for (\ref{Stek}) on real-analytic
Riemannian surfaces.

An interesting topic related to the measure of nodal sets is about
doubling inequality. Based on doubling inequalities, one can obtain
the vanishing order of eigenfunctions, which characterizes how fast
the eigenfunctions vanish.
  For the
classical eigenfunctions of (\ref{class}), Donnelly-Fefferman in \cite{DF},
\cite{DF1} obtained that the maximal vanishing order of $e_\lambda$
is at most order $C\lambda$ everywhere. To achieve it, a doubling
inequality
\begin{equation}
\int_{\mathbb B (z_0,\, 2r)}e_\lambda^2\leq Ce^{\lambda}
\int_{\mathbb B(z_0,\, r)}e_\lambda^2 \label{doub}
\end{equation}
is derived using Carleman estimates, where $\mathbb B(p, c)$ denotes
as a ball centered at $p$ with radius $c$. The doubling estimate
(\ref{doub}) plays an important role in obtaining the bounds of
nodal sets for analytic manifolds in \cite{DF} and upper bound of
nodal sets for smooth surfaces in \cite{DF2}. For the Steklov
eigenfunctions, we have obtained a doubling inequality on the
boundary $\partial\mathcal{M}$ and derive that the sharp vanishing
order is less than $C\lambda$ on the boundary $\partial\mathcal{M}$.
For steklov eigenfunction in $\mathcal{M}$, we are also able to get
the doubling inequality, see proposition \ref{proo}. With aid of
doubling estimates and Carleman inequalities, The following optimal
vanishing order for Steklov eigenfunctions can be obtained.

\begin{theorem}
The vanishing order of Steklov eigenfunction $e_\lambda$ of
(\ref{Stek}) in $\mathcal{M}$ is everywhere less than $C\lambda$.
\label{th3}
\end{theorem}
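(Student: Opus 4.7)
The plan is to deduce the bound $N\leq C\lambda$ on the vanishing order of $e_\lambda$ from the interior doubling inequality (Proposition \ref{proo}):
\begin{equation*}
\int_{B(z_0, 2r)} e_\lambda^2 \, dV_g \leq e^{C\lambda} \int_{B(z_0, r)} e_\lambda^2 \, dV_g
\end{equation*}
for every $z_0 \in \mathcal{M}$ and every small $r$. This is the exact Steklov counterpart of the Donnelly--Fefferman passage from (\ref{doub}) to optimal vanishing for the classical eigenfunctions in (\ref{class}), so the proof proceeds in the same two moves.

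First I would establish a uniform $L^2$ lower bound $\int_{B(z_0, r_0)} e_\lambda^2 \, dV_g \geq e^{-C\lambda}$ at every interior point $z_0$. Starting from the normalization $\int_{\partial\mathcal{M}} e_\lambda^2 \, dV_g = 1$ and using the harmonicity of $e_\lambda$ together with Green's identity and the Steklov condition, one obtains a reference ball $B(z^*, r_0)$ with $\int_{B(z^*, r_0)} e_\lambda^2 \geq c\lambda^{-1}$. Covering $\mathcal{M}$ by a finite chain of overlapping balls of radius $r_0$ and iterating the doubling inequality along this chain then propagates the lower bound to every $z_0 \in \mathcal{M}$, at the cost of a multiplicative factor $e^{-C\lambda}$ whose constant depends only on the geometry.

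Next, fix $z_0 \in \mathcal{M}$ and suppose $e_\lambda$ vanishes to order $N$ at $z_0$. Iterating the doubling inequality $k$ times between the scales $r_0$ and $r_0 2^{-k}$ gives
\begin{equation*}
\int_{B(z_0, r_0 2^{-k})} e_\lambda^2 \, dV_g \geq e^{-C(k+1)\lambda}.
\end{equation*}
In the opposite direction, harmonicity of $e_\lambda$ combined with an interior $L^2$-to-$L^\infty$ estimate and the standard bound $\|e_\lambda\|_{L^\infty(\mathcal{M})} \leq e^{C\lambda}$ allows a Taylor expansion at $z_0$ of the form
\begin{equation*}
\int_{B(z_0, r)} e_\lambda^2 \, dV_g \leq e^{C\lambda}\, r^{2N+2}
\end{equation*}
for $r$ below a fixed scale. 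Matching the two inequalities at $r = r_0 2^{-k}$ and taking logarithms yields $(2N+2)k\log 2 \leq C(k+2)\lambda + O(\log r_0^{-1})$; dividing by $k$ and sending $k \to \infty$ forces $N \leq C\lambda$, which is the claim.

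The chief technical obstacle is the case $z_0 \in \partial\mathcal{M}$: both the doubling inequality of Proposition \ref{proo} and the Taylor-type estimate must respect the Steklov condition $\partial_\nu e_\lambda = \lambda e_\lambda$. For doubling, the Carleman estimate already absorbs the Robin weight $\lambda$ into the boundary term; for the vanishing-order reduction, I would extend $e_\lambda$ across $\partial\mathcal{M}$ using the Steklov condition to obtain an auxiliary equation of the form $\Delta u = 0$ on a slightly larger manifold with a controlled lower-order perturbation, so that the frequency-function / Carleman machinery runs uniformly across interior and boundary points. Verifying that the constants throughout depend only on $(\mathcal{M}, g)$ and not on $\lambda$ beyond the exponential factor is the delicate bookkeeping step.
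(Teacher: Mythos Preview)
Your proposal is correct and follows essentially the same route as the paper: doubling inequality plus a chain-of-balls propagation, then iteration down in scale to bound the vanishing order. Two organizational differences are worth noting.

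First, the paper dissolves your ``chief technical obstacle'' at the very beginning rather than at the end. It replaces $e_\lambda$ by $v=e_\lambda\,e^{\lambda\delta(z)}$, where $\delta$ is a smooth extension of the distance to $\partial\mathcal{M}$; this converts the Steklov condition into a Neumann condition, and then the manifold is doubled. The resulting $v$ satisfies a second-order elliptic equation with drift $\bar b=O(\lambda)$ and potential $\bar q=O(\lambda^2)$ on the \emph{closed} manifold $\overline{\mathcal{M}}$, so the Carleman estimate, the three-ball inequality, Proposition~\ref{proo}, and the vanishing-order deduction are all purely interior and there is no separate boundary case to treat. Your suggested extension ``to obtain an auxiliary equation of the form $\Delta u=0$ on a slightly larger manifold'' is this same device, except that the extended equation is not $\Delta u=0$ but rather $\Delta_{g'}v+\bar b\cdot\nabla_{g'}v+\bar q\,v=0$.

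Second, the paper's final step is more direct than your Taylor-expansion sandwich. From the $L^\infty$ form of the doubling inequality on the compact doubled manifold one iterates immediately to $\|v\|_{L^\infty(\mathbb{B}(z_0,r))}\geq r^{C\lambda}$, which is already the statement that the vanishing order is at most $C\lambda$; no upper bound of the type $\int_{\mathbb{B}(z_0,r)}e_\lambda^2\leq e^{C\lambda}r^{2N+2}$ is needed. This avoids having to track how the implicit constants in the Taylor/elliptic estimates depend on $N$.
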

 It's sharpness can be seen  in the case that the manifold $
\mathcal{M}$ is a ball. Notice that the doubling estimates in
proposition \ref{proo} and the vanishing order in Theorem \ref{th3}
hold for any $n$-dimensional compact manifolds.

Singular sets
$$\mathcal{S}_\lambda=\{ z\in\mathcal{M}|e_\lambda=0, \nabla e_\lambda=0\}
$$ are contained in nodal sets.
In Riemannian surfaces, those singular sets are finite points in the
1-dimensional nodal sets. It is interesting to count the number of
those singular sets. Based on a Carleman inequality with
singularities, we are able to show an upper bound of singular sets.
\begin{theorem}
Let $(\mathcal{M}, g)$ be a smooth, compact surface with
smooth boundary $\partial\mathcal{M}$. There holds
\begin{equation}H^0(\mathcal{S}_\lambda)\leq C\lambda^2
\end{equation} for Steklov eigenfunctions in (\ref{Stek}). \label{th2}
\end{theorem}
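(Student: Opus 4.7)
Since every singular point satisfies $\nabla e_\lambda = 0$, we have $\mathcal{S}_\lambda \subset \operatorname{Crit}(e_\lambda)$, and it suffices to bound the number of interior critical points of $e_\lambda$. My plan is to convert this into counting zeros of a holomorphic function via isothermal coordinates, and then control that count by combining the interior doubling inequality of Proposition \ref{proo} with a Carleman inequality whose weight has a singularity at the chosen base point.

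Because $\mathcal{M}$ is a smooth compact surface, I would cover it by finitely many isothermal charts $(U_j, z = x+iy)$, in which $g = \rho^2 |dz|^2$ so that harmonicity of $e_\lambda$ with respect to $g$ reduces to the flat Laplace equation. Then
$$ F(z) := \partial_x e_\lambda - i\,\partial_y e_\lambda $$
is holomorphic on each $U_j$, and its zero set---counted with multiplicity $N-1$ at any point where $e_\lambda$ vanishes to order $N$---coincides with the critical set of $e_\lambda$. Charts meeting $\partial\mathcal{M}$ need extra care: I would flatten the boundary and use the Robin-type relation $\partial_\nu e_\lambda = \lambda e_\lambda$ together with the boundary version of the doubling estimate to transfer uniform control of $F$ up to $\partial \mathcal{M}$.

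Having reduced to counting zeros of a holomorphic $F$, I would propagate the doubling inequality from $e_\lambda$ to $F$ via Cauchy's derivative estimates on slightly smaller concentric disks, obtaining
$$ \sup_{B(z_0,2r)}|F| \leq e^{C\lambda}\sup_{B(z_0,r)}|F|. $$
Jensen's formula on $B(z_0,2r)$ then yields the zero count
$$ n(B(z_0,r)) \leq \frac{1}{\log 2}\log\frac{\sup_{B(z_0,2r)}|F|}{|F(z_0)|} \leq C\lambda + \log\frac{\sup_{B(z_0,2r)}|F|}{|F(z_0)|}. $$
The main obstacle is the potentially enormous error term when the base point $z_0$ sits close to a zero of $F$; this is precisely where the Carleman inequality with singular weight enters. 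By inserting a weight with a controlled logarithmic (or polynomial) singularity at $z_0$ and invoking Theorem \ref{th3}, which bounds the vanishing order at any point by $C\lambda$, one can absorb the bad term at the cost of an extra factor of $\lambda$.

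The final step is a covering argument. Applying the singular Carleman estimate at $O(\lambda)$ carefully chosen base points across the charts $U_j$---each responsible, after the above bookkeeping, for at most $C\lambda$ critical points---and summing yields
$$ H^0(\mathcal{S}_\lambda) \leq C\lambda\cdot\lambda = C\lambda^2, $$
as claimed. I expect the technical heart of the proof to be the construction and correct use of the singular-weight Carleman inequality, since everything else (isothermal reduction, Jensen/argument principle, and the doubling estimate from Proposition \ref{proo}) is comparatively routine once that tool is in hand.
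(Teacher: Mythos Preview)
There is a genuine gap: the step ``propagate the doubling inequality from $e_\lambda$ to $F$ via Cauchy's derivative estimates'' does not work. Cauchy's estimate gives only the one-sided bound $\sup_{B_r}|F|\leq Cr^{-1}\sup_{B_{2r}}|e_\lambda|$, with no reverse inequality, so doubling for $e_\lambda$ does not imply doubling for $F$. Concretely, the harmonic function $u(z)=1+\epsilon\,\mathrm{Re}(z^N)$ on the unit disk has $L^\infty$ doubling exponent as close to $0$ as one likes (take $\epsilon$ small), yet $F=N\epsilon\,z^{N-1}$ has doubling exponent $N-1$ and $N-1$ zeros at the origin. Hence neither the doubling constant of $e_\lambda$ nor the vanishing-order bound of Theorem~\ref{th3} controls $\log\bigl(\sup_{B_{2r}}|F|/\sup_{B_r}|F|\bigr)$, and Jensen's formula gives no usable bound on the number of critical points. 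The underlying problem is that in passing from $\mathcal{S}_\lambda$ to all of $\mathrm{Crit}(e_\lambda)$ you discard the condition $e_\lambda(z_l)=0$, and that condition is exactly what makes the count tractable.

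The paper uses that condition directly. After doubling the manifold (which disposes of the boundary cleanly, in contrast to your vague treatment near $\partial\mathcal M$) and rescaling a ball of radius $c\lambda^{-1/2}$ to unit size, it applies the Carleman inequality of Proposition~\ref{pro21} to the transformed eigenfunction $w$ itself, with weight $|P(z)|^{-2}e^{\tau|z|^2}$ where $P(z)=\prod_l(z-z_l)^{m_l}$ carries zeros at \emph{all} the singular points simultaneously. Because $w$ vanishes to order $m_l+2$ at each $z_l$, the cutoff errors near those points tend to $0$ as the excised disks shrink; the resulting inequality, combined with the doubling estimate for $w$ and the elementary comparison $\min_{|z|\leq 1/2}|P|^{-2}\geq e^{c\sum_l m_l}\max_{|z|\geq 3/2}|P|^{-2}$, forces $\sum_l m_l\leq C\lambda$ in each such ball. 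Summing over the $O(\lambda)$ balls of the cover gives $C\lambda^2$. The weight $|P|^{-2}$ is the analogue of Jensen's formula here, but it is applied to $w$, whose vanishing at the $z_l$ is known, not to its gradient.
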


For the nodal sets of Steklov eigenfunctions, we are able to build a
similar type of Carleman inequality as \cite{DF2} and show the
following result.
\begin{theorem}
Let $(\mathcal{M}, g)$ be a smooth, compact surface with
smooth boundary $\partial\mathcal{M}$. Then
\begin{equation}
H^1(\mathcal{N}_\lambda)\leq C\lambda^{\frac{3}{2}}
\end{equation}
holds for Steklov eigenfunctions in (\ref{Stek}). \label{th1}
\end{theorem}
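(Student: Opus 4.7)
The plan is to extend Donnelly-Fefferman's argument \cite{DF2} for the $\lambda^{3/2}$ upper bound on nodal sets of classical eigenfunctions on surfaces to the Steklov setting. The main inputs at hand are the interior doubling inequality (Proposition \ref{proo}), the sharp vanishing order bound $C\lambda$ (Theorem \ref{th3}), and the harmonicity $\triangle_g e_\lambda = 0$ in the interior. The new technical ingredient is a Carleman inequality on $\mathcal{M}$ compatible with the Steklov boundary condition, as indicated in the introduction.

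First I would separate $\mathcal{M}$ into an interior region $\mathcal{M}_{\mathrm{int}} = \{z : d(z,\partial\mathcal{M}) \geq \delta_0\}$ and a collar $\mathcal{M}_{\mathrm{col}}$ of width $\delta_0$ independent of $\lambda$. On $\mathcal{M}_{\mathrm{int}}$, cover by geodesic discs $B(z_j, r)$. In isothermal coordinates the Laplace-Beltrami operator becomes $\triangle_g = e^{-2\phi}\triangle$, so $e_\lambda$ is Euclidean harmonic as well, hence locally $e_\lambda = \operatorname{Re} F$ for some holomorphic $F$. By Jensen's formula, the number of zeros of $F$ in $B(z_j, r/2)$ is controlled by the logarithm of the doubling ratio of $|F|$ on concentric discs, which via the conjugate-harmonic relation is bounded by the local doubling index of $e_\lambda$. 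Since $\{\operatorname{Re} F = 0\}$ consists of finitely many analytic arcs passing through the zeros of $F$, the nodal length in such a disc is at most $Cr$ times this index. The global bound from Proposition \ref{proo} is $\leq C\lambda$; the sharper $\lambda^{3/2}$ exponent emerges from a Carleman-weight estimate analogous to \cite{DF2}, which shows that at the critical scale $r \sim \lambda^{-1/2}$ the local index is typically much smaller than $C\lambda$, and balancing the typical and exceptional contributions while summing over the $\sim \lambda$ covering discs yields the desired interior bound.

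The main obstacle is the boundary collar $\mathcal{M}_{\mathrm{col}}$, where the Robin condition $\partial_\nu e_\lambda = \lambda e_\lambda$ blocks a direct holomorphic reduction. To treat it, I would pass to Fermi coordinates $(s, t)$ with $t = d(\cdot, \partial\mathcal{M})$ and introduce the modified function $\tilde{e}_\lambda(s, t) := e^{\lambda t} e_\lambda(s, t)$. A direct computation gives $\partial_t \tilde{e}_\lambda|_{t=0} = 0$, so $\tilde{e}_\lambda$ extends evenly across $\partial\mathcal{M}$; in the flat Fermi-coordinate model it satisfies $\triangle \tilde{e}_\lambda - 2\lambda\partial_t \tilde{e}_\lambda + \lambda^2 \tilde{e}_\lambda = 0$, an elliptic equation whose lower-order terms are of the same order as for a classical Laplace eigenfunction with eigenvalue $\lambda^2$. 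The Steklov-adapted Carleman inequality, together with the same complex-analytic counting applied to $\tilde{e}_\lambda$ in boundary half-discs, then produces a contribution of order $\lambda^{3/2}$ from $\mathcal{M}_{\mathrm{col}}$. Combined with Theorem \ref{th2}, which caps the branching of nodal arcs at the finitely many singular points, the interior and boundary bounds add up to yield $H^1(\mathcal{N}_\lambda) \leq C\lambda^{3/2}$.
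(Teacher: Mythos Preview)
Your overall architecture---transform away the Robin boundary condition via $\tilde e_\lambda = e^{\lambda t}e_\lambda$, reflect across $\partial\mathcal M$, and then argue as in \cite{DF2}---matches the paper's Section~2. But there are two substantive divergences, one of which is a genuine gap.

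\medskip

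\textbf{The interior.} Your holomorphic approach is actually \emph{sharper} than you describe. Since $e_\lambda$ is truly $g$-harmonic in $\mathcal M_{\mathrm{int}}$, in isothermal coordinates it is Euclidean harmonic, hence $e_\lambda=\operatorname{Re}F$ locally; the doubling index from Proposition~\ref{proo} together with Jensen's formula bounds the nodal length in a fixed-size disc by $C\lambda$, and finitely many such discs cover $\mathcal M_{\mathrm{int}}$. That already gives $O(\lambda)$ for the interior contribution; the scale $\lambda^{-1/2}$ and the ``typical vs.\ exceptional'' balancing are unnecessary there. The paper does \emph{not} do this: it applies the exponential transformation $v=e_\lambda e^{\lambda\delta}$ globally, doubles once, and treats the whole manifold uniformly via the DF2 machinery, accepting $\lambda^{3/2}$ everywhere.

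\medskip

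\textbf{The collar: the gap.} In the collar your $\tilde e_\lambda$ satisfies (in your own notation) $\triangle\tilde e_\lambda-2\lambda\partial_t\tilde e_\lambda+\lambda^2\tilde e_\lambda=0$, plus first-order terms coming from the non-flat Fermi metric. This function is \emph{not} harmonic, so there is no holomorphic $F$ with $\tilde e_\lambda=\operatorname{Re}F$, and ``the same complex-analytic counting applied to $\tilde e_\lambda$'' is unavailable. What is actually required---and what the paper does in Sections~3 and~5--7---is the full Donnelly--Fefferman mechanism for second-order elliptic equations with lower-order coefficients of size $\lambda,\lambda^2$: a Carleman inequality with polynomial weight $|P(z)|^{-2}e^{\tau|z|^2}$ where $P(z)=\prod_l(z-z_l)$ has prescribed zeros (Proposition~\ref{pro21}); the ``rapid growth'' Proposition~\ref{proh}, which says that under hypotheses $(R_1)$--$(R_3)$ the number of discs where a specific annular $L^2$ growth condition (\ref{rap}) holds is at most $C\lambda$; the slow-growth--to--nodal-length Proposition~\ref{leng}; and finally the iterated bisection (Calder\'on--Zygmund) scheme of Section~7, which sums $\sum_k C\lambda^2\cdot 2^{-k}\lambda^{-1}$ over the slow-growth squares to get $O(\lambda)$ in each rescaled ball of radius $c\lambda^{-1/2}$, hence $O(\lambda^{1/2})$ after scaling back, and $O(\lambda^{3/2})$ after summing over the $\sim\lambda$ balls. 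Your proposal gestures at ``a Carleman inequality'' and ``balancing,'' but the specific polynomial-weighted estimate, the rapid/slow dichotomy, and the dyadic stopping-time argument are the heart of the proof and cannot be replaced by Jensen's formula once harmonicity is lost. You also need not invoke Theorem~\ref{th2}: the singular set enters only through Lemma~\ref{singular}, which merely says it has measure zero.
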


The outline of the paper is as follows. Section 2 is devoted to
reducing the Steklov eigenvalue problem into an equivalent elliptic
equation without boundary. Then we obtain the optimal doubling
inequality and show theorem \ref{th3}. In section 3, we establish
the Carleman inequality with singularities at finite points. Under
additional assumptions of those singular points, a stronger Carleman
inequality is derived. We measure the singular sets in section 4.
Section 5, 6 and 7 are devoted to obtaining the nodal length of
Steklov eigenfunctions. Under the slow growth of $L^2$ norm
condition, we find out the nodal length in section 6. Based on a
similar type of Calder\'on and Zygmund decomposition procedure, we
show the slow growth at almost every point. Then the measure of
nodal sets is arrived by summing up the nodal length in each small
square. The letter $c$, $C$, $C_i$, $d_i$ denote generic positive
constants and do not depend on $\lambda$. They may vary in different
lines and sections.

\noindent{\bf Acknowledgement.} It is my pleasure to thank Professor
Christopher D. Sogge for helpful discussions about this topic and
guidance into the area of eigenfunctions. I also would like to thank
X. Wang for many fruitful conversations.

\section{Vanishing Order of Steklov Eigenfunctions}
In this section, we will reduce the Steklov eigenvalue problem to an
equivalent model on a boundaryless manifold. The presence of
eigenvalue on the boundary $\partial\mathcal{M}$ will be reflected
on the coefficient functions of a second order elliptic equation.
Let $d(z)= dist\{z,
\partial\mathcal{M}\}$ denote the geodesic distance function from $x\in\mathcal{M}$ to the
boundary $\partial\mathcal{M}$. Since $\mathcal{M}$ is smooth, there
exist a $\rho$-neighborhood of $\partial\mathcal{M}$ in
$\mathcal{M}$ such that $d(x)$ is smooth in the neighborhood. Let's
denoted it as $\mathcal{M}_{\rho}$. We extend $d(z)$ smoothly in
$\mathcal{M}$ by
\begin{equation}
\delta(z)=\left \{\begin{array}{lll} d(z) \quad z\in \mathcal{M}_{\rho}, \medskip\nonumber\\
l(z) \quad z\in \mathcal{M}\backslash\mathcal{M}_{\rho},
\end{array}
\right.
\end{equation}
where $l(z)$ is a smooth function in
$\mathcal{M}\backslash\mathcal{M}_{\rho}$. Note that the extended
function $\delta(z)$ is a smooth function in $\mathcal{M}$. We first
reduce Steklov eigenvalue problem into an elliptic equation with
Neumann boundary condition. Let
$$v(z)=e_\lambda \exp\{\lambda \delta(z)\}.  $$
It is known that $v(z)=e_\lambda(z)$ on $\partial \mathcal{M}$.  For
$z\in
\partial \mathcal{M}$, $\nabla_g \delta(z)=-\nu(z)$. Recall that $\nu(z)$ is the unit outer normal on $z\in\partial\mathcal{M}
$. We can check that the new function $v(z)$ satisfies
\begin{equation}
\left \{ \begin{array}{lll} \triangle_g v+b(z)\cdot\nabla_g
v+q(z)v=0 &\quad \mbox{in}\ \mathcal{M},
\medskip\\
\frac{\partial v}{\partial \nu}=0 &\quad \mbox{on}\
\partial\mathcal{M},
\end{array}
\right.
\end{equation}
with
\begin{equation}
\left \{\begin{array}{lll}
b(z)=-2\lambda \nabla_g \delta(z),   \medskip\\
q(z)=\lambda^2|\nabla_g \delta(z)|^2-\lambda\triangle_g \delta(z).
\end{array}
\right. \label{fffw}
\end{equation}
In order to get rid of boundary condition, we attach two copies of
$\mathcal{M}$ along the boundary and consider a double manifold
$\overline{\mathcal{M}}=\mathcal{M}\cup \mathcal{M}$. The metric $g$
extends to $\overline{\mathcal{M}}$ with Lipschitz type singularity
along $\partial\mathcal{M}$, since the lift metric $ {g'}$ of $g$ on
$\mathcal{M}$ to the double manifold $\overline{\mathcal{M}}$ is
Lipschitz. There also exists a canonical involutive isometry
$\mathcal {F}: \overline{\mathcal{M}}\to \overline{\mathcal{M}}$
that interchanges the two copies of ${\mathcal{M}}$. Then the
function $v(x)$ can be extended to $\overline{\mathcal{M}}$ by
$v\circ \mathcal {F}= v$. Therefore, $v(z)$ satisfies
\begin{equation}
\triangle_{g'} v+\bar{b}(z)\cdot\nabla_{g'} v+\bar{ q} (z)v=0 \quad
\mbox{in}\ \overline{\mathcal{M}}.  \label{star}
\end{equation}
From (\ref{fffw}), one can see that
\begin{equation}
\left \{\begin{array}{lll}
\|\bar b\|_{W^{1, \infty}(\overline{\mathcal{M}})}\leq C\lambda, \medskip\\
\|\bar q\|_{W^{1, \infty}(\overline{\mathcal{M}})}\leq C\lambda^2.
\label{core}
\end{array}
\right.
\end{equation}
After this procedure, we can instead study the nodal sets for the
 second order elliptic equation (\ref{star}) with assumption
(\ref{core}). Note that $\overline{\mathcal{M}}$ is a manifold
without boundary.

We present a brief proof of Theorem \ref{th3}. It is a small
modification of the argument in \cite{Zh}, where the sharp vanishing
order of Steklov eigenfunctions on boundary $\partial\mathcal{M}$ is
shown to be less than $C\lambda$.
\begin{proof}[Proof of Theorem \ref{th3}]
 By a standard regularity
argument, we can still consider polar coordinate for (\ref{star}).
We are able to establish a similar Carleman inequality as \cite{Zh}
 for the general second order elliptic
equation (\ref{star}). See also e.g. \cite{BC}.

\begin{lemma}
Let $u\in C^{\infty}_{0}(\frac{1}{2}\eps_1<r<\eps_0)$. If
$\tau>C_1(1+\|\bar b\|_{W^{1, \infty}}+\|\bar q\|^{1/2}_{W^{1,
\infty}})$. Then
\begin{equation}
\int r^4 e^{2\tau \phi(r)}|\triangle_{g'} u+\bar b \cdot \nabla_{g'}
u+ \bar q u|^2\, drd\omega\geq C_2 \tau^3\int r^{\eps}
e^{2\tau\phi(r)} u^2 \,drd\omega, \label{cca}
\end{equation}
where $\phi(r)=-\ln r+r^\eps$ and $r$ is the geodesic distance.
$0<\eps_0, \eps_1, \eps<1$ are some fixed constants. Moreover, $(r,
\omega)$ are the standard polar coordinates. \label{carl}
\end{lemma}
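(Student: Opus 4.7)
The plan is to prove this Carleman estimate by the classical conjugate-and-positive-commutator method, following the pattern of \cite{Zh}. Setting $v=e^{\tau\phi(r)}u$, consider the conjugated operator
$$\mathcal{L}_\tau := e^{\tau\phi}\bigl(r^{2}(\triangle_{g'}+\bar b\cdot\nabla_{g'}+\bar q)\bigr)e^{-\tau\phi}.$$
In geodesic polar coordinates $(r,\omega)$ on $\overline{\mathcal{M}}$, one has $r^{2}\triangle_{g'}=(r\partial_{r})^{2}+\triangle_{\omega}+O(r)$, the lower order terms being curvature corrections. Introduce the logarithmic radial variable $t=-\ln r$. Then $(r\partial_{r})^{2}=\partial_{t}^{2}$, so the principal part of $r^{2}\triangle_{g'}$ is $\partial_{t}^{2}+\triangle_{\omega}$, and the weight becomes $\phi(t)=t+e^{-\eps t}$ with $\phi'(t)=1-\eps e^{-\eps t}$ and $\phi''(t)=\eps^{2}e^{-\eps t}>0$. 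The strict positivity of $\phi''$ in this variable is exactly the convexity needed to drive the Carleman estimate, and is the very reason for including the lower order perturbation $r^\eps$ in the weight.

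Next, decompose $\mathcal{L}_\tau=\mathcal{L}_\tau^{+}+\mathcal{L}_\tau^{-}$ into its formally self-adjoint and antisymmetric parts. At principal order in $\tau$, $\mathcal{L}_\tau^{+}\sim \partial_{t}^{2}+\triangle_{\omega}+\tau^{2}(\phi')^{2}$ and $\mathcal{L}_\tau^{-}\sim -2\tau\phi'\partial_{t}-\tau\phi''$. Using the identity
$$\|\mathcal{L}_\tau v\|_{L^{2}}^{2}=\|\mathcal{L}_\tau^{+}v\|^{2}+\|\mathcal{L}_\tau^{-}v\|^{2}+\bigl\langle[\mathcal{L}_\tau^{+},\mathcal{L}_\tau^{-}]v,v\bigr\rangle,$$
and computing the commutator by integration by parts in $(t,\omega)$, the principal positive contribution is
$$\bigl\langle[\mathcal{L}_\tau^{+},\mathcal{L}_\tau^{-}]v,v\bigr\rangle\gtrsim \tau^{3}\!\int\!\phi''(\phi')^{2}v^{2}\,dt\,d\omega+\tau\!\int\!\phi''\bigl(|\partial_{t}v|^{2}+|\nabla_{\omega}v|^{2}\bigr)\,dt\,d\omega.$$
Substituting $\phi''=\eps^{2}e^{-\eps t}=\eps^{2}r^{\eps}$, using that $(\phi')^{2}$ is bounded below for $t$ in the range corresponding to $\frac{1}{2}\eps_{1}<r<\eps_{0}$, and then undoing the change of variable together with $v=e^{\tau\phi}u$ produces the claimed right-hand side $C_{2}\tau^{3}\int r^{\eps}e^{2\tau\phi}u^{2}\,dr\,d\omega$, along with an extra gradient term of size $\tau\int r^{2+\eps}e^{2\tau\phi}|\nabla_{g'}u|^{2}\,dr\,d\omega$ that can be retained on the right-hand side as slack.

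Finally, track how the lower order coefficients $\bar b$ and $\bar q$ propagate through $\|\mathcal{L}_\tau^{\pm}v\|^{2}$ and the commutator. They produce error terms of size at most $\tau\|\bar b\|_{W^{1,\infty}}$, $\|\bar b\|_{W^{1,\infty}}^{2}$, and $\|\bar q\|_{W^{1,\infty}}$ times appropriate $L^{2}$ norms of $v$ and $\partial_{t}v$, all of which are absorbed into the principal commutator terms $\tau^{3}(\phi')^{2}\phi''v^{2}$ and $\tau\phi''\bigl(|\partial_{t}v|^{2}+|\nabla_{\omega}v|^{2}\bigr)$ precisely when $\tau>C_{1}\bigl(1+\|\bar b\|_{W^{1,\infty}}+\|\bar q\|_{W^{1,\infty}}^{1/2}\bigr)$, matching the stated hypothesis. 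The main obstacle is the commutator computation in the second step: one must carefully book-keep the curvature corrections from the non-flat surface metric in polar coordinates, the contributions of the first- and zeroth-order perturbations, and verify the sharp $\tau$ scaling. The Lipschitz singularity of the doubled metric along the original $\partial\mathcal{M}$ causes no trouble, since the support of $u$ lies in the annulus $\tfrac{1}{2}\eps_{1}<r<\eps_{0}$ which, by appropriate choice of the polar center, sits in a smooth region of $\overline{\mathcal{M}}$.
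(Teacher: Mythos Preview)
The paper does not actually supply its own proof of this lemma; it merely states the inequality and refers to \cite{Zh} and \cite{BC} for the argument. Your sketch---conjugating by $e^{\tau\phi}$, passing to the logarithmic radial variable $t=-\ln r$ so that $\phi''(t)=\eps^{2}e^{-\eps t}=\eps^{2}r^{\eps}>0$, decomposing the conjugated operator into symmetric and antisymmetric parts, and extracting the $\tau^{3}\phi''(\phi')^{2}$ term from the commutator---is precisely the method used in those references, so your approach is the intended one and is correct in outline.

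One small caution on your final remark: you cannot always arrange for the annulus $\tfrac{1}{2}\eps_{1}<r<\eps_{0}$ to avoid the Lipschitz singularity of the doubled metric along $\partial\mathcal{M}$, since the lemma must apply at centers arbitrarily close to (or on) the original boundary. The paper handles this by the sentence ``By a standard regularity argument, we can still consider polar coordinate for (\ref{star})''; in practice the Carleman machinery goes through for Lipschitz metrics because only one weak derivative of the metric enters the commutator computation and the resulting bounded error is again absorbed for large $\tau$. You should invoke that rather than claiming the support avoids the singularity.
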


Using this Carleman estimate and choosing suitable test functions, a
Hadamard's three-ball result can be obtained in
$\overline{\mathcal{M}}$. There exist constants $r_0,$ $C$ and
$0<\gamma<1$ depending only on $\overline{\mathcal{M}}$ such that
for any solutions of (\ref{star}), $0<r<r_0$, and
$z_0\in\overline{\mathcal{M}}$, one has
\begin{equation}
\int_{\mathbb B(z_0,\, r)}{ v}^2\leq e^{C(1+\|\bar b\|_{W^{1,
\infty}}+{\|\bar q\|}^{1/2}_{W^{1, \infty}})}(\int_{\mathbb B(z_0,\,
{2r})}{v}^2)^{1-\gamma}(\int_{\mathbb B(z_0,\, {r/2})}{
v}^2)^{\gamma}.
\end{equation}
Based on a propagation of smallness argument using the three-ball
result and Carleman estimates (\ref{cca}), taking the assumptions
(\ref{core}) into account, we are able to obtain the doubling
inequality in $\overline{\mathcal{M}}$.
\begin{proposition}
There exist constants $r_0$ and $C$  depending only on
$\overline{\mathcal{M}}$ such that for any $0<r<r_0$ and $z_0\in
\overline{\mathcal{M}}$, there holds
\begin{equation}
\|v\|_{L^2(\mathbb B(z_0, \,{2r}))}\leq
e^{C\lambda}\|v\|_{L^2(\mathbb B(z_0,\, {r}))} \label{ddou}
\end{equation}
for any solutions of (\ref{star}). \label{proo}
\end{proposition}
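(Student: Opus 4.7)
My plan is to promote the Hadamard three-ball inequality stated just before the proposition to the doubling bound (\ref{ddou}) via a propagation-of-smallness argument adapted to the compact double manifold $\overline{\mathcal{M}}$. After normalizing $\|v\|_{L^2(\overline{\mathcal{M}})}=1$, it suffices to prove the uniform lower bound
\[
\|v\|_{L^2(\mathbb B(z_0, r))}\geq e^{-C\lambda}
\]
for every $z_0\in\overline{\mathcal{M}}$ and every $0<r<r_0$, since the matching upper bound $\|v\|_{L^2(\mathbb B(z_0,2r))}\leq 1$ is automatic. The $e^{C\lambda}$ factor in the three-ball inequality already absorbs the bounds $\|\bar b\|_{W^{1,\infty}}\leq C\lambda$ and $\|\bar q\|^{1/2}_{W^{1,\infty}}\leq C\lambda$ from (\ref{core}).

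\textbf{Large-scale step.} Cover $\overline{\mathcal{M}}$ by a uniformly bounded number $N$ of balls of radius $r_0/8$. By pigeonhole at least one of them, say $\mathbb B(p_*, r_0/8)$, carries at least $N^{-1/2}$ of the total $L^2$ mass. For an arbitrary $z_0\in\overline{\mathcal{M}}$ connect it to $p_*$ by a chain $p_*=z^{(0)},\dots,z^{(m)}=z_0$ of consecutive geodesic distance below $r_0/8$ and of uniformly bounded length $m$. Apply the three-ball inequality at each $z^{(j+1)}$ with radii $r_0/8,\,r_0/4,\,r_0/2$: the inclusion $\mathbb B(z^{(j)}, r_0/8)\subset \mathbb B(z^{(j+1)}, r_0/4)$ feeds the previous lower bound into the left-hand side, while the trivial estimate $\|v\|_{L^2(\mathbb B(z^{(j+1)}, r_0/2))}\leq 1$ controls the upper input. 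Iterating $m$ times, the constant $c_0=N^{-1/2}$ propagates along the chain to yield $\|v\|_{L^2(\mathbb B(z_0, r_0/8))}\geq e^{-C\lambda}$ with $C$ depending only on $\overline{\mathcal{M}}$.

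\textbf{Small-scale step.} For $0<r<r_0/8$, iterate the three-ball inequality at $z_0$ along dyadic scales. Setting $g_k=\log\|v\|_{L^2(\mathbb B(z_0,\,2^{-k}r_0/8))}$, the three-ball inequality at scale $2^{-k}r_0/8$ reads $g_k\leq C\lambda+(1-\gamma)g_{k-1}+\gamma g_{k+1}$, which rearranges to a geometric recurrence for the dyadic decrements $h_k=g_{k-1}-g_k\geq 0$:
\[
h_{k+1}\leq \tfrac{C\lambda}{\gamma}+\tfrac{1-\gamma}{\gamma}\,h_k.
\]
Provided the three-ball exponent $\gamma$ satisfies $\gamma>\tfrac12$, this map is a contraction with fixed point $C\lambda/(2\gamma-1)$; combined with the initial estimate $h_1\leq C\lambda$ (obtained by applying the large-scale step at both radii $r_0/8$ and $r_0/16$), it gives $h_k\leq C\lambda$ uniformly in $k$, which is the dyadic form of (\ref{ddou}). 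If the $\gamma$ produced by Lemma \ref{carl} does not satisfy $\gamma>\tfrac12$, re-run the Carleman argument of Lemma \ref{carl} with a wider triple of radii $(\theta^{-1}r,\,r,\,\theta r)$ for large $\theta$; the weight $\phi(r)=-\log r+r^\eps$ in (\ref{cca}) is tailored so that $\gamma$ can be pushed arbitrarily close to $1$.

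The main obstacle is exactly this small-scale bookkeeping: one must guarantee that the per-scale exponential loss $e^{C\lambda}$ does not accumulate across the infinitely many dyadic scales between $r_0/8$ and $0$. Contractivity of the recurrence is what secures scale-uniform control, and it is ultimately a consequence of the $-\log r$ term in the Carleman weight, which makes the three-ball exponent insensitive to rescaling. Combining the two steps yields $\|v\|_{L^2(\mathbb B(z_0, r))}\geq e^{-C\lambda}$ for all $z_0$ and $0<r<r_0$, and the doubling bound (\ref{ddou}) follows at once.
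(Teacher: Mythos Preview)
Your large-scale step (propagation of smallness along a chain of overlapping balls) is correct and is exactly what the paper has in mind. The small-scale step, however, contains a genuine gap. With the Carleman weight $\phi(r)=-\log r+r^\eps$ of Lemma~\ref{carl}, the three-ball exponent for any \emph{symmetric} triple $(\theta^{-1}r,\,r,\,\theta r)$ equals
\[
\gamma=\frac{\phi(r)-\phi(\theta r)}{\phi(\theta^{-1}r)-\phi(\theta r)}=\frac{\log\theta+O(r^\eps)}{2\log\theta+O(r^\eps)}\longrightarrow \tfrac12\qquad(r\to 0),
\]
independently of $\theta$: the dominant part $-\log r$ of $\phi$ is affine in $\log r$, so symmetric widening cannot push $\gamma$ past $\tfrac12$. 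At $\gamma=\tfrac12$ your recurrence degenerates to $h_{k+1}\leq 2C\lambda+h_k$, giving $h_k\leq h_1+2C(k-1)\lambda$ and hence a doubling constant $e^{Ck\lambda}$ at scale $2^{-k}r_0$ --- not uniform in $r$. Asymmetric triples do not help either: taking $(r,\,2r,\,2^m r)$ yields $\gamma=(m-1)/m$ and an $(m-1)$-term recurrence that is again exactly borderline.

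The paper's route, following \cite{Zh} and \cite{BC}, does not iterate the three-ball inequality for the small-scale step at all. After your large-scale lower bound $\|v\|_{L^2(\mathbb B(z_0,\,r_0/4))}\geq e^{-C\lambda}$ is established, one returns to the Carleman estimate (\ref{cca}) itself, with a cutoff supported in $\{cr<|z-z_0|<R_0\}$ for a \emph{fixed} outer radius $R_0\sim r_0$ and with $\tau$ \emph{fixed} at a sufficiently large multiple of $\lambda$ rather than optimized over. The inner boundary contribution is controlled by $\|v\|_{L^2(\mathbb B(z_0,\,r))}$, while the outer boundary contribution is absorbed by the interior term on the fixed annulus near $R_0$ thanks to the large-scale lower bound; what remains is precisely $\|v\|_{L^2(\mathbb B(z_0,\,2r))}\leq e^{C\lambda}\|v\|_{L^2(\mathbb B(z_0,\,r))}$ with $C$ independent of $r$. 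The point is that the three-ball inequality has already optimized $\tau$ away; going back to the raw Carleman inequality and pinning $\tau\sim\lambda$ against a fixed outer scale is what prevents the per-scale losses from accumulating.
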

One can see that the doubling estimate holds in $\mathcal{M}$ if
$\mathbb B(z_0, {2r})\subset\mathcal{M}$. By standard elliptic
estimates, one can have $L^\infty$ norm of doubling inequality
$$
\|v\|_{L^\infty(\mathbb B(z_0, \,{2r}))}\leq
e^{C\lambda}\|v\|_{L^\infty(\mathbb B(z_0,\, {r}))}.
$$
Since $\overline{\mathcal{M}}$ is compact, we can derive that
$$ \|v\|_{L^\infty(\mathbb B(z_0, \,{r}))} \geq r^{C\lambda} $$
for any $z_0\in \overline{\mathcal{M}}$, which implies the vanishing
order for $v$ is less than $C\lambda$. So is the vanishing order of
$u$. This completes Theorem \ref{th3}.
\end{proof}
\section{Carleman estimates}

This section is to devoted to establishing Carleman inequalities
involving weighted functions at finite points. We will consider the
behavior of $v$ in a conformal coordinate patch. Since
$\overline{\mathcal{M}}$ is a compact Riemannian surface. There
exists a finite number $N$ of conformal charts $( \mathcal {U}_i,
\phi_i)$ with $\phi_i: \mathcal {U}_i\subset
\overline{\mathcal{M}}\to \mathcal{V}_i\subset \mathbb R^2$ and
$i\in\{1, 2, \cdots N\}$. On each of these charts, the metric is
conformally flat and there exists positive function $g_i$ such that
$g'=g_i(x, y)(dx^2+dy^2)$. By the compactness of the surface, there
is positive constants $c$ and $C$ such that $0<c<g_i<C$ for
each $i$. Under this equivalent metric, the equation (\ref{star})
can be written as
\begin{equation}
\triangle v+{{\bar{b}}}(z)\cdot\nabla v+{ {\bar{q}}} (z)v=0 \quad
\mbox{in}\ \mathcal {V}_i, \label{star1}
\end{equation}
where $ \triangle$ is Euclidian Laplacian, $\nabla$ is  Euclidian
gradient and $z=(x,\,y)$. We use the same notations $\bar{b}(z)$ and
$\bar{q}(z)$ as that in (\ref{star1}), since they satisfy the same
conditions as (\ref{core}). They only differ by some function about
$g_i$.

By restricting into a small ball $\mathbb B(p, 3c)$ contained in the
conformal chart, we consider $v$ in the small ball. Let $\bar
v(z)=v(cz)$. It follows from (\ref{star1}) that
\begin{equation}
\triangle \bar v+{\tilde{{b}}}(z)\cdot\nabla \bar v+{ \tilde{{q}}}
(z)\bar v=0 \quad \mbox{in}\ \mathbb {B}_3, \label{star2}
\end{equation}
with $\tilde{{b}}=c\bar b$ and $\tilde{q}=c^2 \bar q$. If $c$ is
sufficiently small, $\tilde{{b}}$ and $\tilde{q}$ are arbitrary
small.

The crucial tool in \cite{DF2} is a Carleman inequality for
classical eigenfunctions involving weighted functions with
singularity at finite points. We will obtain the corresponding
Carleman inequality for the second order  elliptic equation
(\ref{star2}). We adapt the approach in \cite{DF2} to obtain the
desirable Carleman estimate for (\ref{star2}).

Let $\mathcal {D}\subset \mathcal {C}$ be an open set and $\psi\in
C^\infty_{0}(\mathcal {D})$ be a real valued unction. We introduce
the following differential operators
$$\mathcal{\partial} =\frac{1}{2}( \frac{\partial}{\partial x}-i \frac{\partial}{\partial
y}), \quad \quad \overline{\partial} =\frac{1}{2}(
\frac{\partial}{\partial x}+i \frac{\partial}{\partial y}).$$ Direct
computation shows that
$\overline{\partial}\partial\psi=\frac{1}{4}\triangle \psi$. By the
Cauchy-Riemann equation, $u$ is holomorphic if and only if
$\overline{\partial}u=0$. For the completeness, we present the
elementary inequality in
 \cite{DF2}.

 \begin{lemma}
let $\Phi$ be a smooth positive function in $\mathcal {D}$. Then
\begin{equation}
\int_{\mathcal {D}}|\overline{\partial}u|^2\Phi\geq
\frac{1}{4}\int_{\mathcal {D}}(\triangle \ln \Phi)|u|^2\Phi.
\end{equation}
Here the integral is taken with respect to the lebesgue measure.
\label{lem1}
 \end{lemma}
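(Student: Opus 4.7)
The plan is to prove this as the standard one-variable weighted $\bar\partial$ a priori estimate: compute the formal adjoint of $\overline{\partial}$ with respect to the weighted inner product $\int f\bar g\,\Phi\,dA$, compute its commutator with $\overline{\partial}$, and then read the stated inequality off from the positivity of $\|\overline{\partial}^* u\|^2$. Although not written in the statement, compact support of $u$ in $\mathcal{D}$ is implicitly needed so that the boundary terms in the integrations by parts vanish.

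First I would introduce the weighted $L^2$ pairing $\langle f, g\rangle_\Phi := \int_{\mathcal{D}} f\bar g\,\Phi\, dA$ and compute the formal adjoint $\overline{\partial}^*$ of $\overline{\partial}$ with respect to it. Starting from $\int_{\mathcal{D}} \overline{\partial}(u\bar v\Phi)\, dA=0$ (compact support) and using that $\ln\Phi$ is real (so $\overline{\overline{\partial}\ln\Phi}=\partial\ln\Phi$), one reads off
\begin{equation*}
\overline{\partial}^* v \;=\; -\partial v - v\,\partial\ln\Phi.
\end{equation*}
Next, I would compute the commutator $[\overline{\partial}, \overline{\partial}^*]$. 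Because $\partial\overline{\partial}=\overline{\partial}\partial=\tfrac14\triangle$, the second-order parts cancel and only a zeroth-order term survives:
\begin{equation*}
[\overline{\partial}, \overline{\partial}^*]u \;=\; -u\,\overline{\partial}\partial\ln\Phi \;=\; -\tfrac14(\triangle\ln\Phi)\,u.
\end{equation*}

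The proof then concludes by writing $\overline{\partial}^*\overline{\partial} = \overline{\partial}\,\overline{\partial}^* - [\overline{\partial}, \overline{\partial}^*]$ and pairing with $u$ in the weighted inner product:
\begin{equation*}
\|\overline{\partial}u\|_\Phi^2 \;=\; \langle u,\,\overline{\partial}^*\overline{\partial}u\rangle_\Phi \;=\; \|\overline{\partial}^* u\|_\Phi^2 \;+\; \tfrac14\int_{\mathcal{D}}(\triangle\ln\Phi)|u|^2\Phi\, dA,
\end{equation*}
after which one simply drops the manifestly non-negative term $\|\overline{\partial}^* u\|_\Phi^2$.

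The main obstacle is purely bookkeeping of complex conjugates, e.g.\ the identities $\overline{\overline{\partial}u} = \partial\bar u$ and $\overline{\overline{\partial}\ln\Phi}=\partial\ln\Phi$, which must be tracked carefully when computing $\overline{\partial}^*$ and its commutator. The genuine content of the argument is the commutator identity $[\overline{\partial},\overline{\partial}^*]=-\tfrac14\triangle\ln\Phi$, which both explains the coefficient $\tfrac14$ on the right-hand side of the lemma and makes transparent why the weight must appear through $\ln\Phi$.
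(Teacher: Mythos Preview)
Your proof is correct: the adjoint formula, the commutator computation $[\overline{\partial},\overline{\partial}^{*}]=-\tfrac14\triangle\ln\Phi$, and the resulting identity
\[
\|\overline{\partial}u\|_\Phi^2=\|\overline{\partial}^{*}u\|_\Phi^2+\tfrac14\int_{\mathcal D}(\triangle\ln\Phi)\,|u|^2\Phi
\]
all check out, and you correctly flag that compact support of $u$ (which the paper sets up just before the lemma, with a mild notational slip) is what kills the boundary terms.

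As for comparison: the paper does \emph{not} prove this lemma. It only states it, crediting \cite{DF2} (``for the completeness, we present the elementary inequality in \cite{DF2}''), and then moves on to construct the weight. So there is no in-paper argument to compare against. Your argument is precisely the H\"ormander weighted $\overline{\partial}$ computation, which is also what underlies the Donnelly--Fefferman version; the alternative presentation one sometimes sees --- substituting $u=e^{-\varphi/2}v$ with $\Phi=e^{\varphi}$ and integrating by parts directly --- unwinds to the same commutator identity, so your route is both standard and essentially the intended one.
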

We want  the weight function to involve those singular points. To
specialize the choice of $\Phi $, we construct the following
function $\psi_0$.
\begin{lemma}
There exists a smooth function $\psi_0$ defined for $|z|>1-2a$
satisfying the following properties: \\
(i) $a_1\leq \psi_0(z)\leq a_2$ with constant $a_1, a_2>0$.\medskip\\
(ii) $\psi_0=1$ on $\{|z|>1\}$.\medskip \\
(iii) $\triangle \ln \psi_0\geq 0$ on $\{|z|>(1-2a)\}$.\medskip\\
(iv) If $1-2a<|z|<1-a$, then $\triangle \ln \psi_0\geq a_3>0$.
\end{lemma}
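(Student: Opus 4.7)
The plan is to seek $\psi_0$ as a radial function, $\psi_0(z) = e^{G(|z|)}$, and reduce the four conditions to one-dimensional conditions on a smooth function $G:(1-2a,\infty)\to[0,\infty)$. In two dimensions the Laplacian of a radial function reads
$$
\triangle G(r) \;=\; G''(r) + \frac{1}{r}G'(r) \;=\; \frac{1}{r}\bigl(rG'(r)\bigr)',
$$
so the four conditions translate to: (a) $G\equiv 0$ on $[1,\infty)$, giving (ii); (b) $G$ bounded and $\geq 0$, giving (i) with $a_1=1$ and $a_2=e^{\sup G}$; (c) $(rG')'\geq 0$ on $(1-2a,\infty)$, giving (iii); (d) $(rG')'\geq a_3 r$ on $(1-2a,1-a)$, giving (iv).

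The strategy is to prescribe $H(r):=rG'(r)$ via its derivative $H'(r)=\eta(r)$, a smooth non-negative bump. Fix a small $\delta\in(0,a/2)$ and pick $\eta\in C_c^\infty(\mathbb{R})$ with $\mathrm{supp}(\eta)\subset(1-2a,\,1-\delta)$, $\eta\geq 0$ everywhere, and $\eta\geq A$ on $[1-2a,1-a]$, where $A>0$ is a constant to be fixed. Define successively
$$
H(r):=-\int_r^\infty \eta(s)\,ds,\qquad G'(r):=\frac{H(r)}{r},\qquad G(r):=-\int_r^\infty G'(s)\,ds.
$$
Since $\eta$ is compactly supported strictly inside $(1-2a,1)$, both $H$ and $G$ vanish identically for $r\geq 1-\delta$, so $G\equiv 0$ on $[1,\infty)$ with smooth matching at $r=1$. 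From $\eta\geq 0$ one gets $H\leq 0$, hence $G'\leq 0$, hence $G\geq 0$, monotonically non-increasing on $(1-2a,1)$, and bounded above by $G(1-2a)<\infty$. This settles (i) and (ii). For (iii) and (iv), observe that $\triangle G = H'(r)/r = \eta(r)/r$, which is non-negative on all of $\{|z|>1-2a\}$, and on the annulus $\{1-2a<|z|<1-a\}$ satisfies $\eta(r)/r\geq A$ since $r<1$; taking $a_3:=A$ finishes the verification.

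There is no substantive obstacle; the only mildly delicate point is choosing $\eta$ to be $C^\infty$, compactly supported in $(1-2a,1-\delta)$, and pointwise $\geq A$ on the closed annulus $[1-2a,1-a]$, which is produced by a standard mollification of a suitable indicator. The radial ansatz is what makes the analysis trivially reduce to the identity $\triangle G=(rG')'/r$, decoupling the sign condition on $\triangle\ln\psi_0$ from the normalization $\psi_0=1$ outside $\{|z|\le 1\}$.
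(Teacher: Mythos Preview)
Your radial ansatz $\psi_0=e^{G(|z|)}$ and the reduction to prescribing $(rG')'=\eta$ is exactly the right idea, and is in fact more explicit than the paper, which simply asserts that such a $\psi_0$ exists ``from existence and unique theory of ordinary differential equations'' and gives no further detail. So the approach is correct and in the same spirit.

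There is, however, a small but genuine inconsistency in your specification of $\eta$. You ask for $\eta\in C_c^\infty(\mathbb{R})$ with $\mathrm{supp}(\eta)\subset(1-2a,1-\delta)$ and simultaneously $\eta\geq A$ on $[1-2a,1-a]$. These two requirements are incompatible: compact support contained in the \emph{open} interval $(1-2a,1-\delta)$ forces $\eta$ to vanish in a neighborhood of $r=1-2a$, so no mollification of an indicator will produce such a function. Worse, with any $\eta$ supported strictly to the right of $1-2a$ you would get $\triangle\ln\psi_0=\eta(r)/r=0$ for $r$ slightly larger than $1-2a$, and property (iv) would fail near the inner edge of the annulus.

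The repair is immediate and leaves your argument intact. Since $\psi_0$ is only required on the open region $\{|z|>1-2a\}$, there is no need for $\eta$ to vanish near the left endpoint. Take instead $\eta\in C^\infty(\mathbb{R})$ with $\eta\equiv A$ for $r\leq 1-a$, $\eta\equiv 0$ for $r\geq 1-\delta$, and $0\leq\eta\leq A$ in between (a standard smooth cutoff). Your integral formulas for $H$, $G'$, and $G$ then define smooth functions on $(1-2a,\infty)$; $G$ is bounded because $G'=H/r$ is bounded on the finite interval $(1-2a,1-\delta)$ and vanishes beyond it; and properties (i)--(iv) follow exactly as you wrote, with $a_3=A$.
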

The existence of such $\psi_0$ follows from existence and unique
theory of ordinary differential equations.

We assume that
 $$D_l=\{z|\, |z-z_l|\leq \delta\}.$$
 Let $D_l$ be a finite collection of pairwise
disjoint disks, which are contained in a unit disk centered at
origin. Let $$D_l(a)=\{z|\, |z-z_l|\leq (1-2a)\delta\}$$ be the
smaller concentric disk. We define a smooth weight function
$\Psi_0(z)$ as
\begin{equation}
\Psi_0(z)=\left \{ \begin{array}{lll} 1 \quad \mbox{if} \ z\not\in
\cup_l
D_l,\nonumber \medskip \\
\psi_0(\frac{z-z_l}{\delta}) \quad \mbox{if} \ z\in D_l. \nonumber
\end{array}
\right.
\end{equation}
We also introduce the following domain
$$ A_l=\{ (1-2a)\delta\leq |z-z_l|\leq (1-a)\delta\}.$$
 From the last lemma, $\Psi_0(z)$ satisfies those properties:\\
(i) $a_1\leq \Psi_0(z)\leq a_2$. \medskip\\
(ii) $\triangle \ln \Psi_0\geq 0$ for $z\in \mathbb R^2\backslash
\cup_l D_l(a)$. \medskip \\
(iii) $\triangle \ln \Psi_0\geq a_3 \delta^{-2}$ for $z\in A_l$. \\
Note that $a_i$ in above are positive constants independent of
$\lambda$. Denote $$A=\cup_l A_l.$$ Suppose that $\tau$ is a
nonnegative constant. We introduce $\Phi(z)=\Psi_0(z)e^{\tau
|z|^2}$. For $u\in C^{\infty}_0(\mathbb R^2\backslash \cup_l
D_l(a))$, we assume that $\mathcal {D}$ contains the support of $u$
and $A\subset \mathcal {D}\subset \mathbb R^2\backslash \cup_l
D_l(a)$. Obviously,
$$ \ln \Phi(z)=  \ln\Psi_0(z) +\tau |z|^2.        $$
Substituting $\Phi$ in lemma \ref{lem1} gives that
\begin{equation}\int_{\mathcal{D}}|\overline{\partial} u|^2\Psi_0(z)e^{\tau
|z|^2}\geq C_1\tau \int_{\mathcal{D}}|u|^2\Psi_0(z)e^{\tau
|z|^2}+C_2 \delta^{-2} \int_{A}|u|^2e^{\tau |z|^2},
\end{equation}
where we have used the properties (ii) and (iii) for $\Psi_0$. The
boundedness of $\Psi_0(z)$ yields that
\begin{equation}
\int_{\mathcal{D}}|\overline{\partial} u|^2 e^{\tau |z|^2}\geq
C_3\tau \int_{\mathcal{D}}|u|^2e^{\tau |z|^2}+C_4 \delta^{-2}
\int_{A}|u|^2e^{\tau |z|^2}. \label{impt}
\end{equation}
Define a holomorphic function $$P(z)=\prod_l (z-z_l).$$ Then
$\overline{\partial}({u}/{P})={\overline{\partial} u}/P$. Replacing
$u$ by ${u}/{P}$ in (\ref{impt}), it follows that
\begin{equation}
\int_{\mathcal{D}}|\overline{\partial} u|^2 |P|^{-2}e^{\tau
|z|^2}\geq C_3\tau \int_{\mathcal{D}}|u|^2|P|^{-2}e^{\tau |z|^2}+C_4
\delta^{-2} \int_{A}|u|^2|P|^{-2}e^{\tau |z|^2}. \label{impt1}
\end{equation}

We will establish a Carleman inequality for second order elliptic
equations as (\ref{star2}). Write $\tilde{b}(x)= (\tilde{b}_1(x),\,
\tilde{b}_2(x)).$  Let
$$u=\partial f+\frac{1}{2}(\tilde{b}_1- i\tilde{b}_2)f, $$
where $f\in C_0^\infty(\mathbb R^2\backslash \cup_l D_l(a))$ is a
real valued function. Then
$$\overline{\partial} u=\frac{1}{4}[\triangle f+div\tilde{b} f+\tilde{b}\cdot\nabla f+i(\frac{\partial(\tilde{b}_1f)}{\partial y}
-\frac{\partial(\tilde{b}_2f)}{\partial x})].      $$ Plugging above
$u$ into (\ref{impt1}), we obtain
\begin{eqnarray}
\int_{\mathcal{D}}\big[|\triangle f &+&\tilde{b}\cdot\nabla
f|^2+|div\tilde{b} f|^2 +|\frac{\partial(\tilde{b}_1f)}{\partial y}
-\frac{\partial(\tilde{b}_2f)}{\partial x}|^2\big]|P|^{-2} e^{\tau
|z|^2}  \nonumber\\
&\geq & C_3\tau\int_{\mathcal{D}}|\nabla f|^2 |P|^{-2}e^{\tau
|z|^2}-
C_3\tau\int_{\mathcal{D}}|\tilde{b}|^2|f|^2 |P|^{-2}e^{\tau |z|^2} \nonumber\\
&+ &C_4\delta^{-2}\int_{A}|\nabla f|^2|P|^{-2}e^{\tau |z|^2}-
C_4\delta^{-2}\int_{A}|\tilde{b}|^2|f|^2|P|^{-2}e^{\tau |z|^2}.
\label{many}
\end{eqnarray}
If we choose $u=f$ in (\ref{impt1}), we get
\begin{equation}
\int_{\mathcal{D}}|\nabla f|^2 |P|^{-2}e^{\tau |z|^2}\geq C_3\tau
\int_{\mathcal{D}}|f|^2|P|^{-2}e^{\tau |z|^2}. \label{impt2}
\end{equation}
Since the norm of $\tilde{b}$ is chosen small enough, it is smaller
than $\tau$ which will be chosen large enough. With aid of
(\ref{impt2}), we can incorporate the terms about $\tilde{b}$ in the
left hand side of (\ref{many}) into the first term in the right hand
side of (\ref{many}),
\begin{eqnarray}
\int_{\mathcal{D}}|\triangle f +\tilde{b}\cdot\nabla f|^2 |P|^{-2}
e^{\tau |z|^2}& \geq & C_5 \tau \int_{\mathcal{D}}|\nabla
f|^2|P|^{-2} e^{\tau |z|^2} +C_4\delta^{-2}\int_{A}|\nabla
f|^2|P|^{-2}e^{\tau |z|^2} \nonumber\\&-&
C_4\delta^{-2}\int_{A}|\tilde b|^2|f|^2|P|^{-2}e^{\tau |z|^2}.
\label{hao}
\end{eqnarray}
Furthermore, if $u=f$, the inequality (\ref{impt1}) implies that
\begin{equation} \int_{\mathcal{D}}|\nabla
f|^2 |P|^{-2}e^{\tau |z|^2}\geq C_4 \delta^{-2}
\int_{A}|f|^2|P|^{-2}e^{\tau |z|^2}. \label{any}
\end{equation}
Applying (\ref{any}) to the last term in the right hand side of
(\ref{hao}) gives that
\begin{eqnarray}
\int_{\mathcal{D}}|\triangle f +\tilde{b}\cdot\nabla f|^2 |P|^{-2}
e^{\tau |z|^2}& \geq & C_6 \tau^2 \int_{\mathcal{D}}|
f|^2|P|^{-2} e^{\tau |z|^2} \nonumber \\
&+&C_7\delta^{-2}\int_{A}|\nabla f|^2|P|^{-2}e^{\tau |z|^2}.
\label{keys}
\end{eqnarray}

We continue to get a refined estimate for the last term of
(\ref{keys}). In order to achieve this goal,  we need the following
hypotheses
for the geometry of the disk $D_l$ and the parameter $\tau>1$: \\
($R_{1}$) The radius $\delta$ for each disk $D_l$ is less than $a_4
\tau^{-1}$. \\
($R_{2}$) The distance between any two distinct $z_l$ is at least
$2a_5\tau^{\frac{1}{2}} \delta$.\\
($R_{3}$) The total number of disk $D_l$ is at most $a_6\tau$. \\
 Under the those assumptions, we have those comparison
estimates from \cite{DF2}.

\begin{lemma}
If $\bar z_1$ and $\bar z_2$ are any points in the same component
$A_l$ of $A$, then \\
(i) $a_7< e^{\tau |\bar z_1|^2}/e^{\tau |\bar z_2|^2}<a_8$.\medskip\\
(ii) $a_9<|P(\bar z_1)|/|P(\bar z_2)|<a_{10}$. \label{lemc}
\end{lemma}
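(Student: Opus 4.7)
The plan is to handle the two bounds (i) and (ii) separately, relying on the three geometric hypotheses $(R_1)$--$(R_3)$.

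For (i), the argument is purely metric. Since $\bar z_1,\bar z_2\in A_l$, both points lie within $(1-a)\delta$ of $z_l$, so $|\bar z_1-\bar z_2|\leq 2\delta$, and $|\bar z_i|\leq |z_l|+\delta$ is uniformly bounded because the disks $D_l$ lie in the unit disk. I would write
\[
|\bar z_1|^2 - |\bar z_2|^2 \;=\; (\bar z_1-\bar z_2)\cdot (\bar z_1+\bar z_2),
\]
so that $\bigl||\bar z_1|^2-|\bar z_2|^2\bigr|\leq C\delta$; multiplying by $\tau$ and invoking $(R_1)$, $\tau\delta\leq a_4$, makes the exponent uniformly bounded, and exponentiating delivers the constants $a_7, a_8$.

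For (ii), first factor $P(z)=(z-z_l)\prod_{l'\neq l}(z-z_{l'})$. The $l'=l$ factor has modulus in $[(1-2a)\delta,(1-a)\delta]$ on $A_l$, so its ratio between $\bar z_1$ and $\bar z_2$ is pinned between $(1-2a)/(1-a)$ and $(1-a)/(1-2a)$. For the remaining product, take logarithms and apply the fundamental theorem of calculus along the segment $z(t)=(1-t)\bar z_2+t\bar z_1$, giving for each $l'\neq l$
\[
\bigl|\log|\bar z_1-z_{l'}|-\log|\bar z_2-z_{l'}|\bigr|\;\leq\;|\bar z_1-\bar z_2|\sup_{t\in[0,1]}\frac{1}{|z(t)-z_{l'}|}\;\leq\;\frac{C\delta}{|z_l-z_{l'}|},
\]
where the last inequality uses $(R_2)$ together with $|\bar z_i-z_l|\leq(1-a)\delta$ to replace $|z(t)-z_{l'}|$ by a fixed multiple of $|z_l-z_{l'}|$. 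Summing over $l'\neq l$, the whole task reduces to the combinatorial estimate $\sum_{l'\neq l}\delta/|z_l-z_{l'}|\leq C$.

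The hard part will be exactly this last sum. A crude bound---every term at the minimal separation, times the maximal count from $(R_3)$---only yields $O(\tau^{1/2})$. I would instead decompose the centers into dyadic annuli $\{R\leq|z-z_l|<2R\}$ around $z_l$: hypothesis $(R_2)$ bounds the number of $z_{l'}$ in each such annulus by $O\bigl((R/(\tau^{1/2}\delta))^2\bigr)$ via a standard packing argument, $(R_3)$ caps the global count by $a_6\tau$, and containment in the unit disk truncates the summation at $R=O(1)$. Balancing the packing estimate against the total-count cap across scales, and then multiplying the resulting pointwise sum by $\delta$, is the mechanism by which all three of $(R_1)$--$(R_3)$ must conspire simultaneously to yield a truly $\tau$-independent constant. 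Once this logarithmic sum is controlled, exponentiating produces the constants $a_9$ and $a_{10}$.
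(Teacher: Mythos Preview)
Your argument is correct. The paper does not supply its own proof of this lemma; it simply attributes the estimates to Donnelly--Fefferman \cite{DF2}, so there is no in-paper argument to compare against. Your treatment---the metric bound $\tau\bigl||\bar z_1|^2-|\bar z_2|^2\bigr|\le C\tau\delta\le Ca_4$ for (i), and for (ii) the logarithmic estimate reducing to the dyadic packing/counting control of $\sum_{l'\ne l}\delta/|z_l-z_{l'}|$---is the standard mechanism and is essentially what one finds in \cite{DF2}. One small clarification: in your dyadic sum the packing bound from $(R_2)$ handles scales $R\lesssim\tau\delta$ and the global count $(R_3)$ handles $R\gtrsim\tau\delta$, each contributing $O(1)$; hypothesis $(R_1)$ is actually not needed for part (ii) and enters only in part (i), so your closing remark that all three hypotheses conspire for the logarithmic sum slightly overstates the dependence.
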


We also need the following Poincar\'e type inequality on each
annulus. If $f\in C^{\infty}(A_l)$ and $f$ vanishes on the inner
boundary of $A_l$, then
\begin{equation}
\int_{A_l} |\nabla f|^2 \geq a_{11} \delta^{-2}\int_{A_l}|f|^2.
\label{poin}
\end{equation}
The proof of (\ref{poin}) can be seen in \cite{DF2}. Let $z_l\in
A_l$ be chosen arbitrarily. By lemma \ref{lemc}, it follows that
$$ \int_{A_l}|\nabla f|^2|P(z)|^{-2}e^{\tau |z|^2} \geq  C_8\sum_{l}e^{\tau |z_l|^2}|P(z_l)|^{-2}\int_{A_l} |\nabla f|^2.$$
Since $f\in C_0^\infty(\mathbb R^2\backslash \cup_l D_l(a))$, the
inequality (\ref{poin}) yields that
$$ \int_{A_l}|\nabla f|^2|P(z)|^{-2}e^{\tau |z|^2} \geq
C_9\sum_{l}e^{\tau |z_l|^2}|P(z_l)|^{-2}\delta^{-2}\int_{A_l} |
f|^2.$$ Using lemma \ref{lemc} again, we obtain
$$ \int_{A_l}|\nabla f|^2|P(z)|^{-2}e^{\tau |z|^2} \geq C_{10}
\delta^{-2}\int_{A}| f|^2|P(z)|^{-2}e^{\tau |z|^2}.$$ Substituting
the last inequality into the last term in (\ref{keys}) leads to
\begin{eqnarray}
\int_{\mathcal{D}}|\triangle f +\tilde{b}\cdot\nabla f|^2 |P|^{-2}
e^{\tau |z|^2}& \geq & C_6 \tau^2 \int_{\mathcal{D}}|
f|^2|P|^{-2} e^{\tau |z|^2} \nonumber \\
&+&C_{11}\delta^{-4}\int_{A}| f|^2|P|^{-2}e^{\tau |z|^2}.
\label{keys1}
\end{eqnarray}
We summarize the above arguments in the following proposition.
\begin{proposition}
Assuming $f\in C_0^\infty(\mathbb R^2\backslash \cup_l D_l(a))$.
Then \\
(i) it holds that
\begin{equation}
\int_{\mathcal{D}}|\triangle f +\tilde{b}\cdot\nabla f|^2 |P|^{-2}
e^{\tau |z|^2} \geq  C \tau^2 \int_{\mathcal{D}}| f|^2|P|^{-2}
e^{\tau |z|^2}. \label{rec}
\end{equation}
(ii) If the additional assumptions $(R_1)$-$(R_3)$ for $D_l$ hold, the
stronger inequality (\ref{keys1}) is satisfied. \label{pro21}
\end{proposition}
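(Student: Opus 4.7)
The plan is to package the chain of inequalities derived immediately before the proposition into a clean proof. The backbone is the elementary Lemma \ref{lem1}: once a suitable weight $\Phi$ is found, every further estimate is obtained by specializing $u$ or $f$ and iterating. So the proof is really a bookkeeping exercise built around two structural ingredients: (a) the factorization of the Laplacian via $\overline{\partial}\partial$, which lets a first-order Carleman estimate in $\overline{\partial}$ be upgraded to a second-order one, and (b) the holomorphy of $P(z)=\prod_l(z-z_l)$, which lets us insert the singular weight $|P|^{-2}$ without disturbing the $\overline{\partial}$-identity.

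First I would record the basic weighted $\overline{\partial}$-inequality (\ref{impt}), which is obtained by applying Lemma \ref{lem1} with $\Phi=\Psi_0(z)e^{\tau|z|^2}$ and using properties (ii) and (iii) of $\Psi_0$ to produce separately a $\tau$-term on all of $\mathcal{D}$ and a $\delta^{-2}$-term concentrated on the annular region $A$. Then, exploiting $\overline{\partial}(u/P)=(\overline{\partial}u)/P$, the substitution $u\mapsto u/P$ yields (\ref{impt1}). To pass to the second-order operator I would then take
\[
u=\partial f+\tfrac{1}{2}(\tilde{b}_1-i\tilde{b}_2)f,
\]
so that $4\overline{\partial}u=\triangle f+\tilde{b}\cdot\nabla f+(\mathrm{div}\,\tilde{b})f+i\bigl(\partial_y(\tilde{b}_1f)-\partial_x(\tilde{b}_2f)\bigr)$. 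Plugging into (\ref{impt1}) gives (\ref{many}), in which the genuine second-order quantity $\triangle f+\tilde{b}\cdot\nabla f$ appears on the left and spurious zeroth-order contributions in $\tilde{b}$ appear on both sides.

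Next, applying (\ref{impt1}) directly with $u=f$ gives (\ref{impt2}), the weighted Poincaré-type control of $\nabla f$ by $f$. Because $\tilde{b}=c\bar{b}$ and $\tilde{q}=c^2\bar{q}$ with $c$ as small as we wish, the norms of $\tilde{b}$ are much smaller than the large parameter $\tau$ we will choose. I would therefore absorb the $\int|\tilde{b}|^2|f|^2$ term on the left of (\ref{many}) into $\tau\int|\nabla f|^2$ on the right via (\ref{impt2}), obtaining (\ref{hao}). Finally, one more application of (\ref{impt1}) with $u=f$ (namely the bound (\ref{any})) allows the remaining bad annular $|\tilde b|^2|f|^2$ term to be dominated by the annular $|\nabla f|^2$ term. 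The outcome is (\ref{keys}), which contains the inequality (i) of the proposition together with an additional annular gradient reserve.

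To finish, (i) follows by discarding the annular gradient term in (\ref{keys}). For (ii), under the geometric hypotheses $(R_1)$–$(R_3)$ the comparison estimates of Lemma \ref{lemc} say that both $e^{\tau|z|^2}$ and $|P(z)|^{-2}$ are essentially constant on each annulus $A_l$, so I can freeze them at a representative point and apply the annular Poincaré inequality (\ref{poin}) component by component; then I would restore the weights by Lemma \ref{lemc} again to obtain $\int_A|\nabla f|^2|P|^{-2}e^{\tau|z|^2}\gtrsim \delta^{-2}\int_A|f|^2|P|^{-2}e^{\tau|z|^2}$. Substituting this into the annular gradient term of (\ref{keys}) upgrades the $\delta^{-2}$ prefactor to $\delta^{-4}$ and yields (\ref{keys1}). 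The only place where care is genuinely required is the absorption step: one must verify that the choice of $c$ (independent of $\tau$) is enough to let the $\tilde{b}$-contributions be controlled simultaneously in the bulk term and in the annular term, which is exactly what hypotheses $(R_1)$–$(R_3)$ together with the smallness of $c$ are engineered to ensure.
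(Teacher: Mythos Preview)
Your proposal is correct and follows essentially the same approach as the paper: the paper's own argument is precisely the chain (\ref{impt}) $\to$ (\ref{impt1}) $\to$ (\ref{many}) $\to$ (\ref{hao}) $\to$ (\ref{keys}) $\to$ (\ref{keys1}) that you outline, with the same two structural ingredients (the $\overline{\partial}\partial$ factorization and the holomorphy of $P$) and the same absorption/iteration steps. One small wording slip: when you invoke (\ref{any}), the bad annular term $\delta^{-2}\int_A|\tilde{b}|^2|f|^2$ is absorbed into the \emph{bulk} gradient term $\tau\int_{\mathcal D}|\nabla f|^2$ (which then becomes $\tau^2\int_{\mathcal D}|f|^2$ via (\ref{impt2})), not into the annular gradient term; and this absorption for part (i) requires only the smallness of $\tilde{b}$, not hypotheses $(R_1)$--$(R_3)$, which enter only in part (ii) through Lemma~\ref{lemc}.
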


\section{Measure of Singular Sets}

Let $\mathcal {M}$ be a compact smooth surface. In section 2, we
have shown that the Steklov eigenfunction $e_\lambda$ vanishes at
any points at most $C\lambda$. By the implicit function theorem,
outside the singular sets, the nodal set is locally a 1-dimensional
$C^1$ manifold. Adapting the arguments in \cite{DF2} for
(\ref{star2}), we can estimate those singular points in a
quantitative way. We are able to obtain an upper bound for the
singular points in term of eigenvalue $\lambda$.

\begin{lemma}
Singular sets consist of at most finite many points.
\end{lemma}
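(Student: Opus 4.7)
The plan is to show that each singular point is isolated on $\overline{\mathcal{M}}$ and then invoke compactness. The key input from the preceding sections is that the vanishing order of $v$ (equivalently $e_\lambda$) is finite at every point, so we can run a Bers/Hartman--Wintner style expansion.

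First I would work in a conformal chart, where by (\ref{star2}) the function $v$ satisfies $\triangle v + \tilde b\cdot \nabla v + \tilde q\, v = 0$ with bounded smooth coefficients and with Euclidean Laplacian. By Theorem \ref{th3} the vanishing order of $v$ at any point $z_0$ is some finite integer $N = N(z_0) \leq C\lambda$. The classical Bers--Hartman--Wintner theorem for such linear second order elliptic equations in two variables then gives a local expansion
\begin{equation*}
v(z) \;=\; P_N(z - z_0) \;+\; o(|z - z_0|^N), \qquad z \to z_0,
\end{equation*}
where $P_N$ is a nonzero homogeneous \emph{harmonic} polynomial of degree $N$ (the leading part is harmonic because the lower order terms $\tilde b\cdot\nabla v + \tilde q v$ are of strictly lower order at $z_0$). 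A corresponding expansion may be differentiated termwise, yielding
\begin{equation*}
\nabla v(z) \;=\; \nabla P_N(z - z_0) \;+\; o(|z-z_0|^{N-1}).
\end{equation*}

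Next I would exploit the structure of homogeneous harmonic polynomials in two variables. Up to a rotation, any nonzero homogeneous harmonic polynomial of degree $N\geq 1$ is a real constant multiple of $\operatorname{Re} z^N$ (or $\operatorname{Im} z^N$), and its gradient equals $N(\operatorname{Re} z^{N-1}, -\operatorname{Im} z^{N-1})$, which vanishes only at the origin. Hence $|\nabla P_N(w)| \geq c_0 |w|^{N-1}$ for some $c_0 > 0$. Combined with the expansion above, this forces
\begin{equation*}
|\nabla v(z)| \;\geq\; \tfrac{1}{2} c_0 |z - z_0|^{N-1}
\end{equation*}
on a sufficiently small punctured ball around $z_0$. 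In particular $\nabla v \neq 0$ on a punctured neighborhood of $z_0$, so $z_0$ is isolated in $\mathcal{S}_\lambda$.

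Finally, since every singular point is isolated and $\overline{\mathcal{M}}$ (hence $\mathcal{M}$) is compact, $\mathcal{S}_\lambda$ must be finite; otherwise it would have an accumulation point, contradicting isolatedness. The only real obstacle is justifying the harmonic leading-order expansion for solutions of (\ref{star2}): this requires knowing a priori that the vanishing order is finite (supplied by Theorem \ref{th3}) and then invoking the standard Bers representation, after which the gradient argument and compactness are immediate.
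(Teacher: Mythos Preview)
Your proposal is correct and follows essentially the same approach as the paper: expand $v$ at a singular point so that the leading homogeneous term is harmonic (the paper writes $F_j=r^j(a_1\cos j\theta+a_2\sin j\theta)$ in polar coordinates after a Taylor expansion, while you cite Bers--Hartman--Wintner), observe that the gradient of this harmonic polynomial vanishes only at the origin, deduce isolatedness, and conclude by compactness. The only cosmetic difference is that you name the classical expansion theorem explicitly and carry the $o(|z-z_0|^{N-1})$ remainder for $\nabla v$, whereas the paper leaves this implicit; your appeal to Theorem~\ref{th3} for finite vanishing order is not strictly necessary (Bers already yields it for nontrivial solutions), but it is harmless.
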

\begin{proof}
Without loss of generality, we assume that $0\in \mathcal
{S}_\lambda$ and choose normal coordinate $(x, y)$ at the origin.
 Next we prove there are finite singular points
in $\overline{\mathcal {M}}$. Using Taylor expansion, we expand $v$ locally at
origin. Then $v(x, y)=F_j(x, y)+W_{j+1}(x, y)$, where $F_j(x,y)$
consists of the leading nonvanishing term with homogenous order
$j\geq 2$. $W_{j+1}(x,y)$ is a higher order reminder term. Since
$\triangle v+{{\bar{b}}}(z)\cdot\nabla v+{ {\bar{q}}} (z)v=0$ and
the coordinate is normal, we obtain that $\triangle F_j=0$. Under
polar coordinates, we find that $F_j=r^j\big(a_1
\cos(j\theta)+a_2\sin(j\theta)\big)$. Obviously,
$r^{-1}\frac{\partial F_j}{\partial \theta}$ and $\frac{\partial
F_j}{\partial r}$ have no common zero if $r\not=0$. Since
$$|\nabla F_j|^2=|\frac{\partial F_j}{\partial r}|^2 +\frac{1}{r^2}|\frac{\partial F_j}{\partial \theta}|^2,     $$
there exists a small neighborhood of $\mathcal {U}$ of origin such
that $\mathcal {U}\cap \mathcal {S}_\lambda=0 $. Since
$\overline{\mathcal {M}}$ is compact, then the lemma follows.
\end{proof}

We plan to count the number of singular points in a sufficiently
small ball. Let $p\in \overline{\mathcal {M}}$. Consider a geodesic
ball $\mathbb B(p, c\lambda^{-\frac{1}{2}})$. If $c$ is small
enough, then this geodesic ball is contained in a conformal chart.
 If
we choose
$$ w(z)= v(c {\lambda^{-\frac{1}{2}}z})$$
with $c$ sufficiently small. From equation (\ref{star1}), $w$
satisfies
\begin{equation}
\triangle w+\hat{{b}}(x)\cdot\nabla w+\hat{{q}}(x)w=0 \quad
\mbox{in}\ \mathbb B( 0, 4), \label{starr}
\end{equation}
with $ \hat{b}(x)= c\lambda^{-\frac{1}{2}}\bar{b}(x)$ and
$\hat{q}(x)= c^2\lambda^{-1}{ {\bar{q}}}(x).$ From (\ref{core}), we
obtain
\begin{equation}
\left \{\begin{array}{lll}
\|\hat{b}\|_{W^{1, \infty}(\mathbb B( 0, 4))}\leq c\lambda^{\frac{1}{2}}, \medskip\\
\| \hat{q}\|_{W^{1, \infty}(\mathbb B( 0, 4))}\leq c^2\lambda
\label{core2}
\end{array}
\right.
\end{equation}
with $c$ sufficiently small.

Next we will count the total order of vanishing of singular points
for $w$ in the sufficiently small ball. We study $w$ in equation
(\ref{starr}),
\begin{proposition}
Suppose $z_l\in \mathcal{ S}_\lambda\cap\mathbb B(p, c\lambda^{-\frac{1}{2}})$
where $v$ vanishes to order $n_l+1$. Then $\sum_l n_l\leq C\lambda$.
\label{pro2}
\end{proposition}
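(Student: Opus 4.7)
The plan is to apply the singular Carleman inequality \eqref{rec} of Proposition \ref{pro21} to the rescaled function $w$ satisfying \eqref{starr}, using the holomorphic weight $P(z) = \prod_l (z - z_l)^{n_l}$ of total degree $N = \sum_l n_l$ that encodes every vanishing order simultaneously. After the rescaling $z \mapsto c\lambda^{-1/2}z$, the singular points $z_l$ all lie in $\mathbb B(0,1/4)$, and the coefficients obey $\|\hat b\|_\infty \leq c\lambda^{1/2}$, $\|\hat q\|_\infty \leq c^2\lambda$ by \eqref{core2}.

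Fix $\delta > 0$ so that the disks $D_l = B(z_l,\delta)$ are pairwise disjoint and contained in $\mathbb B(0,1/2)$, and choose a cutoff $\chi$ with $\chi \equiv 0$ on $\cup_l D_l(a)$ and outside $\mathbb B(0,3)$, $\chi \equiv 1$ on $\mathbb B(0,2)\setminus \cup_l D_l$. Apply \eqref{rec} to $f = \chi w$ with $\tau = C_0\lambda$. Using $\triangle w + \hat b\cdot\nabla w = -\hat q w$ one has
\begin{equation*}
\triangle(\chi w) + \hat b\cdot\nabla(\chi w) = -\chi\hat q w + 2\nabla\chi\cdot\nabla w + (\triangle\chi + \hat b\cdot\nabla\chi)w,
\end{equation*}
and the first term is absorbed by the right-hand side $C\tau^2 \int |\chi w|^2 |P|^{-2} e^{\tau|z|^2}$ once $C_0$ is large, because $\|\hat q\|_\infty^2 \leq c^4\lambda^2 \ll \tau^2$. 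The remainder of the left-hand side of \eqref{rec} is supported on $\mathrm{supp}\,\nabla\chi$, which splits into the inner annuli $A_l = D_l\setminus D_l(a)$ and the outer annulus $\mathbb B(0,3)\setminus \mathbb B(0,2)$.

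Normalize $\|w\|_{L^2(\mathbb B(0,4))} = 1$, so by the doubling inequality \eqref{ddou} and elliptic regularity one has $\|w\|_{C^1(\mathbb B(0,3))} \leq e^{C\lambda}$; by pigeonhole and \eqref{ddou} run in reverse there is a ball $B(z^*,\rho) \subset \mathbb B(0,1/2) \setminus \cup_l D_l$ (with $\rho$ a fixed positive constant) on which $\|w\|_{L^2}^2 \geq e^{-C\lambda}$. On $B(z^*,\rho)$ every $|z - z_l| \leq 3/4$, so $|P|^{-2} \geq (4/3)^{2N}$, bounding the left-hand side of \eqref{rec} from below by $c\lambda^2 (4/3)^{2N} e^{-C\lambda}$. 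On the outer annulus every $|z - z_l| \geq 7/4$, so $|P|^{-2} \leq (4/7)^{2N}$, producing an outer contribution of at most $C\lambda (4/7)^{2N} e^{9\tau + C\lambda}$. On each inner annulus a direct calculation using $|w| \sim \delta^{n_l+1}$, $|\nabla w| \sim \delta^{n_l}$, $|\nabla\chi| \sim \delta^{-1}$, and $|P|^{-2} \sim \delta^{-2n_l}\prod_{k\neq l}|z_l - z_k|^{-2n_k}$ shows that the inner contribution depends only polynomially on $N$, once $\delta$ is fixed smaller than $\min_{l\neq k}|z_l - z_k|$.

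Putting the estimates together yields an inequality of the schematic form
\begin{equation*}
\lambda^2 (4/3)^{2N} e^{-C\lambda} \leq C\lambda (4/7)^{2N} e^{(9C_0+C)\lambda} + C\,\mathrm{poly}(N)\,e^{C\lambda}.
\end{equation*}
Since $(4/3)^{2N}$ outpaces every polynomial in $N$, this forces $(7/3)^{2N} \leq C e^{C'\lambda}$, hence $N = \sum_l n_l \leq C''\lambda$, which is the desired conclusion. The main obstacle is handling configurations in which two singular points cluster on a scale smaller than any fixed $\delta$: there one must either merge close pairs (replacing their $n_l$ by the sum) or invoke the refined Carleman inequality \eqref{keys1} of Proposition \ref{pro21}(ii) together with the scale assumptions $(R_1)$--$(R_3)$; either route closes the argument because the doubling estimate already bounds the total vanishing order in any such cluster by $C\lambda$.
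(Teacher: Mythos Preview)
Your overall architecture matches the paper's, but the proposal contains a genuine gap in the treatment of the inner annuli, and this is exactly the point where the paper uses a different (and essential) device.

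\medskip
\textbf{The inner contribution is not polynomial in $N$.}
You fix $\delta>0$ and assert that the contribution of $\mathrm{supp}\,\nabla\chi$ near each $z_l$ ``depends only polynomially on $N$''. This is false. On the annulus $A_l$ one has
\[
|P(z)|^{-2}=|z-z_l|^{-2n_l}\prod_{k\neq l}|z-z_k|^{-2n_k},
\]
and since all $z_k\in\mathbb B(0,1/4)$ and $z\in\mathbb B(0,1/2)$, every factor $|z-z_k|<1$, so the product $\prod_{k\neq l}|z-z_k|^{-2n_k}$ is \emph{exponentially} large in $N-n_l$ (at least $2^{2(N-n_l)}$, say). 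With your exponent choice, the boundary terms $|\nabla\chi\cdot\nabla w|^2+|\triangle\chi\, w|^2$ are of order $\delta^{2n_l-2}$, so after multiplying by $|P|^{-2}\sim\delta^{-2n_l}\prod_{k\neq l}|z-z_k|^{-2n_k}$ and integrating over $A_l$ (area $\sim\delta^2$) you get a term comparable to $\prod_{k\neq l}|z-z_k|^{-2n_k}$, not $O(\mathrm{poly}(N))$. Once this exponential-in-$N$ term appears on the same side as your $(4/3)^{2N}$ lower bound, the comparison collapses and no bound on $N$ follows.

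\medskip
\textbf{What the paper does instead.}
The paper avoids estimating the inner contribution altogether by (i) choosing the exponent in $P$ to be $m_l=n_l-1$ rather than $n_l$, and (ii) sending $\delta\to0$. With exponent $m_l$ one has, on $A_l$,
\[
|\triangle\theta\,w|+|\nabla\theta\cdot\nabla w|+|\hat b\cdot\nabla\theta\,w|\leq C\delta^{m_l},
\qquad |P|^{-2}\leq C\delta^{-2m_l}\cdot O(1),
\]
so the integrand on $A_l$ is $O(1)$ and the integral is $O(\delta^2)\to0$. Dominated convergence then removes the inner annuli entirely, leaving only the outer annulus versus the interior ball; the inequality $\min_{|z|\le 1/2}|P|^{-2}\ge e^{d\sum m_l}\max_{|z|\ge 3/2}|P|^{-2}$ together with the doubling estimate finishes the argument. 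Note that with your exponent $n_l$ the inner integrand is $O(\delta^{-2})$ rather than $O(1)$, so the limit $\delta\to0$ would not help either; the reduction to $m_l$ is not cosmetic.

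\medskip
Your final paragraph about clustering is a symptom of the same problem: once you let $\delta\to0$ there is no clustering obstacle at all, because the disks shrink to points. The fixes you sketch (merging clusters, or invoking \eqref{keys1} with $(R_1)$--$(R_3)$) do not close the gap: merging does not produce a point of the summed vanishing order, and \eqref{keys1} is designed for the rapid-growth argument of Section~5, not for absorbing inner boundary terms here.
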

\begin{proof}
It suffices to count the number of singular point of $w$ in a small
Euclidean ball with radius $\frac{1}{10}$ centered at origin.
Suppose that $w$ vanishes to order $n_l+1$. Let $n_l=m_l+1$. We
first consider the case $n_l\geq 2$. Then $m_l\geq 1$. Define the
polynomial
$$P(z)=\prod(z-z_l)^{m_l}$$ with $|z_l|<\frac{1}{10}$.  Let $\mathcal{D}=\mathbb B(0, \,2)$ and $D_l$ be small
disjoint disks of radius $\delta$ centered at $z_l$. If $f\in
C_0^\infty(\mathbb R^2\backslash \cup_l D_l)$, the inequality
(\ref{rec}) in proposition \ref{pro21} implies that
\begin{equation}
\int_{\mathcal{D}}(|\triangle f|^2+|\hat{b}\cdot\nabla f|^2)
|P|^{-2} e^{d_1\lambda |z|^2} \geq  C_2 \lambda^2\int_{\mathcal{D}}|
f|^2|P|^{-2} e^{d_1\lambda |z|^2}, \label{rec1}
\end{equation}
where $\tau=d_1\lambda$. We choose a cut-off function $\theta(z)$ such
that $\theta w$ is compact support in $\mathcal{D}$. We select the cut-off
function $\theta\in C^{\infty}_{0}(\mathcal{D}\backslash \cup_l D_l
) $ with following
properties: \\
(i) $\theta(z)=1$, \ if $|z|<\frac{3}{2}$ and $|z-z_l|>2\delta$. \medskip\\
(ii) $|\nabla \theta|<C_3$, $|\triangle\theta|<C_4$ if
$|z|>\frac{3}{2}$. \medskip \\
(iii) $|\nabla \theta|<C_5 \delta^{-1}$,
$|\triangle\theta|<C_6\delta^{-2}$ if $|z-z_l|<2\delta$.

Substituting $f=\theta w$ into (\ref{rec1}) yields that
\begin{eqnarray}
\int_{(|z|<\frac{3}{2})\cup(\frac{3}{2}\leq |z|\leq 2 )}|\triangle
(\theta w)+\tilde{b}\cdot\nabla (\theta w)|^2 |P|^{-2} e^{d_1\lambda
|z|^2} \geq C_2 \lambda^2\int_{|z|<\frac{3}{2}}| w|^2|P|^{-2}
e^{d_1\lambda |z|^2}. \nonumber
\end{eqnarray}
From the equation
(\ref{starr}),
$$\triangle (\theta w) +\hat{b}\cdot\nabla (\theta
w)=-\hat{q}\theta w+\triangle\theta w+2\nabla \theta\cdot\nabla
w+\hat{b}\cdot\nabla \theta w.
$$
By the assumption of $\theta$,
we obtain
 $$|\triangle \theta w|+|\nabla \theta\cdot\nabla w|+|\nabla \theta w|\leq C_7 \delta^{m_l},   \
 \mbox{if}  \ |z-z_l|\leq
2\delta.$$
Taking $\delta\to 0$,  by dominated convergence theorem, we have
\begin{eqnarray}
c\lambda^2\int_{|z|<\frac{3}{2}} | w|^2 |P|^{-2} e^{d_1\lambda
|z|^2} &+&C_9(1+\lambda)^2\int_{\frac{3}{2}\leq |z|\leq 2} (|
w|^2+|\nabla w|^2) |P|^{-2} e^{d_1\lambda |z|^2} \nonumber \\&& \geq
C_{10} \lambda^2\int_{|z|<\frac{3}{2}}| w|^2|P|^{-2} e^{d_1\lambda
|z|^2}. \label{revi}
\end{eqnarray}
Since $c$ is sufficiently small, we can absorb the first term in the
left hand side of (\ref{revi}) into the right hand side. Then \begin{equation}
\int_{\frac{3}{2}\leq |z|\leq 2} (| w|^2+|\nabla w|^2) |P|^{-2}
e^{d_1\lambda |z|^2}  \geq C_{11}\int_{|z|\leq \frac{1}{2}}|
w|^2|P|^{-2} e^{d_1\lambda |z|^2}.
\end{equation}
Obviously, it follows that
\begin{equation}
\max_{|z|\geq \frac{3}{2}} |P|^{-2}\int_{\frac{3}{2}\leq |z|\leq 2}
(| w|^2+|\nabla w|^2) e^{d_1\lambda |z|^2}  \geq C_{11}
(\min_{|z|\leq \frac{1}{2}}|P|^{-2})\int_{|z|\leq \frac{1}{2}}| w|^2.
\end{equation}
By standard elliptic theory, the last inequality implies

\begin{equation}
\max_{|z|\geq \frac{3}{2}} |P|^{-2}e^{d_2\lambda} \int_{|z|\leq
\frac{5}{2}} | w|^2\geq C_{11} (\min_{|z|\leq
\frac{1}{2}}|P|^{-2})\int_{|z|\leq \frac{1}{2}}| w|^2.
\label{than}
\end{equation}
We claim that \begin{equation}  e^{d_3 \sum m_l}\leq
\frac{\min_{|z|\leq \frac{1}{2}} |P|^{-2}} {\max_{|z|\geq
\frac{3}{2}} |P|^{-2}}. \label{more}
  \end{equation}
To prove (\ref{more}), it reduces to verify
\begin{equation}
 e^{-d_4 \sum m_l}\min_{|z|\geq \frac{3}{2}} |P|\geq \max_{|z|\leq \frac{1}{2}}
 |P|.
 \label{mmore}
\end{equation}
away from singular point $z_l$. Clearly,
$$ \max_{|z|\leq \frac{1}{2}} |P|\leq (\frac{1}{2})^{\sum m_l }. $$
Since $z_l\in \mathbb B (0, \frac{1}{10})$, we have
$$ (\frac{3}{4})^{\sum m_l  }\leq\min_{|z|\geq \frac{3}{2}} |P|.        $$
Combining the last two inequalities, we obtain (\ref{mmore}). The
claim is shown. Let's return to (\ref{than}), we get
\begin{eqnarray}
\frac{\min_{|z|\leq \frac{1}{2}} |P|^{-2}} {\max_{|z|\geq
\frac{3}{2}}|P|^{-2}}&\leq& \frac{e^{d_5\lambda}\int_{|z|\leq
\frac{5}{2}}|w|^2 }{C_{11}\int_{|z|\leq \frac{1}{2}}|w|^2 }\nonumber
\\&\leq& e^{d_6\lambda},
\end{eqnarray}
where we applied doubling estimates in the last inequality. Thanks to
(\ref{more}), we obtain
$$  \sum m_l\leq d_7\lambda.     $$
Since $n_l=m_l+1\leq 2m_l$, we complete the lemma for $n_l\geq 2.$

If the vanishing order for the singular point is two, i.e. $n_l=1$.
We consider $Q(z)=\prod(z-z_l)^{\frac{n_l}{2}}$ instead of $P(z)$.
In this case, $Q(z)$ may not be defined as a single valued
holmorphic function on $\mathcal {C}$. We pass to a finite branched
cover of the disk $\mathcal {D}$ punctured at $z_l$. The Carleman
estimates in previous sections still work. The same conclusion will
follow.

\end{proof}

Based on the vanishing order estimate in proposition
\ref{pro2}, we are able to count the number of singular points.

\begin{proof}[Proof of Theorem \ref{th2}]
We cover the double manifold $\overline{\mathcal{M}}$ by geodesic
balls with radius $C\lambda^{-1/2}$. Since $\overline{\mathcal{M}}$
is compact, the order of those balls is $C\lambda$. From proposition
\ref{pro2}, the conclusion in theorem \ref{th2} is arrived.
\end{proof}
\begin{remark}
Thanks to proposition \ref{pro2}, we can actually show a stronger result. Let
$z_l\in \mathcal{M}$ be singular point with vanishing order $n_l+1$,
Then  $\sum_l n_l\leq C\lambda^2$.
\end{remark}

\section{Growth of eigenfunctions}

In this section, we will show that the eigenfunctions do not grow
rapidly on too many small balls. We still restrict $v$ into the
small geodesic ball $\mathbb B(p, c\lambda^{-\frac{1}{2}})$ in the
conformal chart. Let $w(z)= v(c {\lambda^{-\frac{1}{2}}}z)$. Then
$w$ satisfies the elliptic equation (\ref{starr}) with assumptions
(\ref{core2}) in a Euclidean
ball of radius four centered at origin. If we suppose that $w$ grow rapidly, that is,
\begin{equation}
C_{1}\int_{(1-3a)\delta\leq|z-z_l|\leq (1-\frac{3a}{4})\delta}w^2
\leq \int_{(1-\frac{3a}{2})\delta\leq |z-z_l|\leq (1-a)\delta} w^2
\label{rap}
\end{equation}
for all $l$ and some large $C_1$, then the following proposition is valid.
\begin{proposition}
Suppose $D_l$ are disks contained in a Euclidean ball of radius
$\frac{1}{30}$ centered at origin. Furthermore, assume that ($R_1$):
$ \delta< d_1\lambda^{-1}$ and ($R_2$):
$|z_l-z_k|>d_2\lambda^{\frac{1}{2}}\delta$, when $l\not=k$. If
(\ref{rap}) holds for all $l$, the number of disks $D_l$ is less
than $d_3\lambda$. \label{proh}
\end{proposition}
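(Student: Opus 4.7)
The plan is to apply the strong Carleman inequality (\ref{keys1}) from Proposition~\ref{pro21}(ii) with $P(z)=\prod_l(z-z_l)$ and weight parameter $\tau=d_4\lambda$, choosing $d_4$ so that the Carleman hypotheses $(R_1)$ and $(R_2)$ follow from our assumptions $\delta<d_1\lambda^{-1}$ and $|z_l-z_k|>d_2\lambda^{1/2}\delta$. If the third hypothesis $(R_3)$, namely $N\leq a_6\tau$, fails, we argue on a subset of $a_6\tau$ disks (for which all of $(R_1)$--$(R_3)$ hold), and the resulting inequality will force the size of that subset to be bounded by a constant times $\lambda$; choosing $d_4$ appropriately, this contradicts the assumption unless $N$ itself satisfies $N\leq d_3\lambda$.

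I would fix a smooth cutoff $\theta$ supported in $\mathbb{B}(0,2)$, vanishing in a neighbourhood of $\cup_l D_l(a)$, equal to one on $\mathbb{B}(0,3/2)\setminus \cup_l\{|z-z_l|<(1-\tfrac{3a}{4})\delta\}$, with $|\nabla\theta|\lesssim \delta^{-1}$, $|\triangle\theta|\lesssim \delta^{-2}$ inside the inner transition annuli $T_l$ (which are tuned to match the annuli appearing in (\ref{rap})) and with bounded derivatives on $\{3/2\leq|z|\leq 2\}$. Set $f=\theta w$. Equation (\ref{starr}) gives
\begin{equation*}
\triangle f+\hat b\cdot\nabla f=(\triangle\theta)\,w+2\nabla\theta\cdot\nabla w+(\hat b\cdot\nabla\theta)\,w-\hat q\,\theta w,
\end{equation*}
and the $\hat q\theta w$ piece is $O(\lambda|f|)$ by (\ref{core2}) and is absorbed into the $C_6\tau^2\int|f|^2|P|^{-2}e^{\tau|z|^2}$ term on the right of (\ref{keys1}).

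The cutoff-derivative contributions on the inner transition annuli are bounded, after a Caccioppoli-type estimate disposing of $|\nabla w|^2$ on an enlargement of $T_l$, by $C\sum_l\delta^{-4}\int_{T_l^+}w^2|P|^{-2}e^{\tau|z|^2}$. Using hypothesis (\ref{rap}) together with the weight comparison of Lemma~\ref{lemc}, each $T_l^+$-integral is bounded by $C/C_1$ times $\int_{A_l}w^2|P|^{-2}e^{\tau|z|^2}$; choosing $C_1$ sufficiently large, the entire inner contribution is absorbed into the good term $C_{11}\delta^{-4}\int_A|f|^2|P|^{-2}e^{\tau|z|^2}$ on the right of (\ref{keys1}). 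The remaining outer cutoff contribution on $\{3/2\leq|z|\leq 2\}$ then dominates $C_6\tau^2\int_{|z|\leq 1/2}|f|^2|P|^{-2}e^{\tau|z|^2}$.

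Pulling the weights outside by max/min and applying the doubling estimate (Proposition~\ref{proo}) plus standard elliptic regularity to control the $H^1$-norm of $w$ on $\{3/2\leq|z|\leq 2\}$ by $e^{C\lambda}$ times the $L^2$-norm on $\{|z|\leq 1/2\}$, and cancelling the common $L^2$-integral, yields
\begin{equation*}
\frac{\min_{|z|\leq 1/2}|P(z)|^{-2}}{\max_{3/2\leq|z|\leq 2}|P(z)|^{-2}}\leq e^{C\lambda}.
\end{equation*}
Invoking the claim (\ref{more})--(\ref{mmore}) of Section~4 with every $m_l=1$ (so $\sum m_l=N$), this reads $e^{d_3 N}\leq e^{C\lambda}$, hence $N\leq (C/d_3)\lambda$, which is the desired bound. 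The main obstacle is designing the cutoff $\theta$ so that its inner transition annuli coincide with those appearing in (\ref{rap}) and so that $\theta\equiv 1$ on a definite fraction of $A$, allowing $\int_A|f|^2$ to dominate $\int_A w^2$ up to a constant; this is the substitute for the vanishing-order/$\delta\to 0$ mechanism used in the proof of Proposition~\ref{pro2}, where no growth hypothesis such as (\ref{rap}) was available.
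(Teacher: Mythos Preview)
Your proposal is correct and follows essentially the same route as the paper: apply the strong Carleman inequality (\ref{keys1}) to $f=\theta w$ with $\tau\sim\lambda$, absorb the $\hat q$ contribution into the $\tau^2$ term, use elliptic (Caccioppoli) estimates together with Lemma~\ref{lemc} and the rapid-growth hypothesis (\ref{rap}) to dispose of the inner-annulus cutoff errors, and then compare the outer contribution against the good $|z|\leq\tfrac12$ term via doubling and the $|P|$-ratio estimate (\ref{more}) to bound $N$ by $C\lambda$. The only organizational difference is that you absorb the inner-annulus terms into the $\delta^{-4}\int_A$ good term \emph{before} reaching the final inequality, whereas the paper carries both the inner and outer terms to the end (inequality (\ref{end})) and then invokes (\ref{rap}) and (\ref{get}) simultaneously to derive a contradiction; this is purely cosmetic.

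Two minor points worth tightening: first, your explicit cutoff (equal to $1$ outside $\{|z-z_l|<(1-\tfrac{3a}{4})\delta\}$) does not literally match the annuli in (\ref{rap}); the paper takes $\theta\equiv 1$ for $|z-z_l|>(1-\tfrac{3a}{2})\delta$ so that the transition annulus, after Caccioppoli enlargement, becomes exactly the left-hand annulus of (\ref{rap}) and the portion of $A_l$ on which $\theta\equiv 1$ becomes exactly the right-hand annulus---you acknowledge this tuning parenthetically, but the numbers should be made consistent. Second, your opening paragraph on $(R_3)$ is slightly circular as written (the constant $C$ in $e^{C\lambda}$ depends on $d_4$ through the weight $e^{\tau|z|^2}$, so ``choosing $d_4$ appropriately'' requires checking that the gain $e^{d_3' N}$ outpaces this dependence); the paper's framing---assume $N=d_3\lambda$ with $d_3$ large, then derive a contradiction---sidesteps this more cleanly, but your version can be made to work once the dependence is tracked.
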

\begin{proof}
  We will use the stronger Carleman estimates in
(\ref{keys1}) in proposition \ref{pro21}. We prove it by contradiction. Suppose that
the collection $D_l=\{z| \,|z-z_l|\leq \delta\}$ are disjoint disks
satisfying the hypotheses $(R_1)$-$(R_3)$ in section 3. Without loss of
generality, we require all the $D_l$ are in a ball centered at
origin with radius $\frac{1}{30}$. As before, $D_l(a)=\{z|\,
|z-z_l|\leq (1-2a)\delta\},$ where $a$ is a suitably small positive
constant. Let $\mathcal {D}$  be a ball centered at origin with
radius 2. We choose a cut-off function $\theta\in
C^{\infty}_{0}(\mathcal{D}\backslash
\cup_l D_l ) $ and assume $\theta(z)$ satisfies the following properties:\\
(i) $\theta(z)=1$, \ if $|z|<1$ and $|z-z_l|>(1-\frac{3}{2}a)\delta$ for all $l$. \medskip\\
(ii) $|\nabla \theta|+|\triangle\theta|<C_2$ if
$|z|>1$. \medskip \\
(iii) $|\nabla \theta|<C_3 \delta^{-1}$,
$|\triangle\theta|<C_4\delta^{-2}$ if
$|z-z_l|<(1-\frac{3}{2}a)\delta$.\\
 Substituting $f=\theta w$ into
(\ref{keys1}) gives that
\begin{eqnarray}
\int_{\mathcal{D}}|\triangle (\theta w)+\hat{b}\cdot\nabla (\theta
w)|^2 |P|^{-2} e^{d_4\lambda |z|^2}& \geq & C_5 \lambda^2
\int_{\mathcal{D}}|
\theta w|^2|P|^{-2} e^{d_4\lambda|z|^2} \nonumber \\
&+&C_{6}\delta^{-4}\int_{A}| \theta w|^2|P|^{-2}e^{d_4\lambda|z|^2}.
\label{keyss}
\end{eqnarray}
We also assume $\tau=d_4\lambda$.
Recall that $A=\cup_l A_l$ and $A_l=\{z| (1-2a)\delta\leq
|z-z_l|\leq (1-a)\delta\}.$ We first consider the integral in the
left hand side of the last inequality. Again, by
(\ref{starr}),
$$\triangle (\theta w) +\hat{b}\cdot\nabla (\theta
w)=-\hat{q}\theta w+\triangle\theta w+2\nabla \theta\cdot\nabla
w+\hat{b}\cdot\nabla \theta w.
$$
Thus,$$|\triangle (\theta w) +\hat{b}\cdot\nabla (\theta w)|^2 \leq
C(c\lambda^2\theta^2 w^2+|\triangle\theta|^2 w^2+|\nabla \theta|^2|\nabla w|^2 +c\lambda|\nabla
\theta|^2 w^2),$$ where $c$ is
sufficiently small. We will absorb the term involving $\theta^2 w^2$
into the right hand side of (\ref{keyss}). Since $c$ is small
enough, we get
\begin{eqnarray}
\int_{\mathcal{D}}(|\triangle \theta|^2 w^2+ c|\nabla \theta|^2
w^2+|\nabla \theta|^2|\nabla w|^2 ) |P|^{-2} e^{d_4\lambda |z|^2}&
\geq & C_7 \lambda^2 \int_{\mathcal{D}}|
\theta w|^2|P|^{-2} e^{d_4\lambda|z|^2} \nonumber \\
&+&C_{8}\delta^{-4}\int_{A}| \theta w|^2|P|^{-2}e^{d_4\lambda|z|^2}.
\end{eqnarray}
Using the properties of $\theta(z)$ and taking into account that all
$D_l$ lies in the ball centered at origin with radius
$\frac{1}{30}$, we obtain
\begin{eqnarray}
\int_{\mathcal{D}}(|\triangle \theta|^2 w^2 + c|\nabla \theta|^2 w^2
&+&|\nabla \theta|^2|\nabla w|^2 ) |P|^{-2} e^{d_4\lambda |z|^2}
\geq C_7 \lambda^2 \int_{\frac{1}{4}\leq |z|\leq\frac{1}{2} }|
w|^2|P|^{-2} e^{d_4\lambda|z|^2}  \nonumber \\
&+&C_{9}\delta^{-4}\sum_{l} \int_{(1-\frac{3a}{2})\delta\leq
|z-z_l|\leq (1-a)\delta}| w|^2|P|^{-2}e^{d_4\lambda|z|^2}.
\label{cont}
\end{eqnarray}
Next we want to control the left hand side of last inequality. Write
\begin{eqnarray}
\int_{\mathcal{D}}|\triangle \theta|^2 w^2 + c|\nabla \theta|^2 w^2
+|\nabla \theta|^2|\nabla w|^2 ) |P|^{-2} e^{d_4\lambda |z|^2}
=I+\sum_l I_l,
\end{eqnarray}
where
$$I=\int_{1\leq|z|\leq 2}|\triangle \theta|^2 w^2 + c|\nabla \theta|^2 w^2
+|\nabla \theta|^2|\nabla w|^2 ) |P|^{-2} e^{d_4\lambda |z|^2},
$$
$$I_l=\int_{(1-2a)\delta\leq|z-z_l|\leq (1-\frac{3a}{2})\delta}|\triangle \theta|^2 w^2 + c|\nabla \theta|^2 w^2
+|\nabla \theta|^2|\nabla w|^2 ) |P|^{-2} e^{d_4\lambda |z|^2}.
$$
By standard elliptic estimates,
\begin{equation}
I\leq e^{d_5\lambda}\max_{|z|\geq 1}|P|^{-2}\int_{\frac{3}{4}\leq
|z|\leq \frac{5}{2}}w^2.
\end{equation}
Similarly, via elliptic estimates,
\begin{equation}
I_l\leq
C_{10}\delta^{-4}(\max_{A_l}|P|^{-2}e^{d_4\lambda|z|})\int_{(1-3a)\delta\leq|z-z_l|\leq
(1-\frac{3a}{4})\delta}w^2.
\end{equation}
Thanks to lemma \ref{lemc},
\begin{equation}
I_l\leq
C_{11}\delta^{-4}(\min_{A_l}|P|^{-2}e^{d_4\lambda|z|})\int_{(1-3a)\delta\leq|z-z_l|\leq
(1-\frac{3a}{4})\delta}w^2.
\end{equation}
Combining those inequalities together in (\ref{cont}) leads to
\begin{eqnarray}
e^{d_5\lambda}\max_{|z|\geq 1}|P|^{-2}\int_{\frac{3}{4}\leq |z|\leq
\frac{5}{2}}w^2&+&C_{11}\delta^{-4}\sum_l(\min_{A_l}|P|^{-2}e^{d_4\lambda|z|})\int_{(1-3a)\delta\leq|z-z_l|\leq
(1-\frac{3a}{4})\delta}w^2 \nonumber \\
&\geq& C_{12} \min_{|z|\leq \frac{1}{2}}|P|^{-2}
\int_{\frac{1}{4}\leq |z|\leq\frac{1}{2} }|
w|^2   \nonumber \\
&+&C_{13}\delta^{-4}\sum_{l} \min_{A_l}(|P|^{-2}e^{d_4\lambda|z|^2})
\int_{(1-\frac{3a}{2})\delta\leq |z-z_l|\leq (1-a)\delta} w^2.
\label{end}
\end{eqnarray}
Performing the similar arguments as (\ref{more}) shows that
$$\min_{|z|\leq \frac{1}{2}}|P|^{-2}>\max_{|z|\geq 1}|P|^{-2} e^{d_5\sum_l m_l}.             $$
If the number of $D_l$ is $d_3\lambda$, then
\begin{equation}
\min_{|z|\leq \frac{1}{2}}|P|^{-2}>\max_{|z|\geq 1}|P|^{-2}
e^{d_6\lambda}. \label{ppp}
\end{equation}
We claim that
\begin{equation}
e^{C_{14}\lambda}\int_{\frac{1}{4}\leq |z|\leq \frac{1}{2}}w^2\geq
\int_{\frac{3}{4}\leq |z|\leq \frac{5}{2}}w^2. \label{cla}
\end{equation}
We prove the claim by doubling estimates shown in proposition
\ref{proo}. We choose  a ball  $\mathbb B(x_0, \frac{1}{8})\subset
\{z|\frac{1}{4}\leq |z|\leq \frac{1}{2}\}$. It is clear that
$$ \int_{\frac{1}{4}\leq |z|\leq \frac{1}{2}}w^2\geq \int_{\mathbb B(x_0, \frac{1}{8})}w^2.  $$
Using doubling estimates, we have
$$ e^{C_{15}\lambda} \int_{\mathbb B(x_0, \frac{1}{8})}w^2\geq \int_{\mathbb B(x_0, \frac{2}{8})}w^2. $$
By finite iterations, we can find a large ball $\mathbb B(x_0, 3)$ that
contains $\{z|\frac{3}{4}\leq |z|\leq \frac{5}{2}\}$. It yields that
$$\int_{ \mathbb B(x_0, 3)}w^2\geq \int_{\frac{3}{4}\leq |z|\leq \frac{5}{2}}w^2.  $$
Then the combination of those inequalities verify the claim.

If we choose $d_3$ is suitably large, since the number disk $D_l$
is $d_3\lambda$, then $d_6$ is suitably large. From the
inequalities (\ref{ppp}) and (\ref{cla}), it follows that
\begin{equation}
e^{d_5\lambda}\max_{|z|\geq 1}|P|^{-2}\int_{\frac{3}{4}\leq |z|\leq
\frac{5}{2}}w^2< C_{12}
\min_{|z|\leq\frac{1}{2}}|P|^{-2}\int_{\frac{1}{4}\leq |z|\leq
\frac{1}{2}}w^2 . \label{get}
\end{equation}
It contradicts the estimates (\ref{rap}) and
(\ref{end}). The proposition is arrived.
\end{proof}

\section{Growth Estimates and Nodal Length}

This section is to find the connection between growth of
eigenfunctions and nodal length. A suitable small growth in $L^2$
norm implies an upper bound of nodal length. We consider the
second order elliptic equations
\begin{equation}
\triangle  \bar w+{b}^\ast \cdot \nabla  \bar w +{q}^\ast  \bar w=0
\quad \mbox{in} \ \mathbb B(0,\, 4). \label{lll}
\end{equation}
Assume that there exist a positive constant $C$ such that
$\|{b}^\ast\|_{W^{1,\infty}}\leq C $ and
$\|{q}^\ast\|_{W^{1,\infty}}\leq C$. The following lemma relies on
the Carleman estimates in lemma \ref{carl}. Suppose $\eps_1$ is a
sufficiently small positive constant.

\begin{lemma}
Suppose that $w$ satisfies the growth estimate
\begin{equation}
\int_{(1-\frac{3a}{2})\eps_0<r<(1-{a})\eps_0}\bar w^2\leq C_3
\int_{(1-{3a})\eps_0<r<(1-\frac{4a}{3})\eps_0} \bar w^2, \label{hyp}
\end{equation}
where $a$ and $\eps_0$ are fixed small constants. Then for
$0<\eps_1<\frac{\eps_0}{100}$, we have \begin{equation}\max_{r\leq
\eps_1}|\bar w|\geq C_4(\frac{\eps_1}{\eps_0})^{C_5}
\big(\dashint_{\mathbb B(0, (1-\frac{4}{3}a)\eps_0)}\bar w^2\big)^{1/2},
\label{god}\end{equation} where $\dashint$ denotes the average of
the integration. \label{prof}
\end{lemma}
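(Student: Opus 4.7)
The plan is to apply the polar-coordinate Carleman estimate from Lemma \ref{carl} to a cutoff of $\bar w$, using the growth hypothesis (\ref{hyp}) to absorb the outer boundary term and the standard doubling inequality (which is available for $\bar w$ by Proposition \ref{proo}, since $\bar w$ solves an elliptic equation with bounded coefficients) to relate the remaining shell integral to the ball average.

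The first step is to construct a radial cutoff $\eta(r)\in C_0^\infty(\eps_1/2<r<(1-a)\eps_0)$ with $\eta\equiv 1$ on $[\eps_1,(1-\frac{3a}{2})\eps_0]$, satisfying $|\nabla\eta|\lesssim\eps_1^{-1}$, $|\triangle\eta|\lesssim\eps_1^{-2}$ on the inner transition $[\eps_1/2,\eps_1]$ and $|\nabla\eta|, |\triangle\eta|\lesssim 1$ on the outer transition $[(1-\frac{3a}{2})\eps_0,(1-a)\eps_0]$. Plugging $u=\eta\bar w$ into (\ref{cca}) and using (\ref{lll}), the solution contribution cancels and we are left with $\triangle u+b^*\cdot\nabla u+q^*u=(\triangle\eta)\bar w+2\nabla\eta\cdot\nabla\bar w+(b^*\cdot\nabla\eta)\bar w$, supported only in the two transition shells. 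The Carleman inequality then reads, schematically,
\begin{equation*}
C\tau^{3}\int_{\eps_1<r<(1-\frac{3a}{2})\eps_0} r^{\eps}e^{2\tau\phi(r)}\bar w^{2}\,drd\omega \leq I_{\mathrm{inner}}+I_{\mathrm{outer}},
\end{equation*}
where each $I$ is the obvious weighted integral of $\bar w^2+|\nabla\bar w|^2$ over the corresponding transition shell.

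For the outer piece, I would first use a Caccioppoli-type estimate to dominate $\int|\nabla\bar w|^2$ by $\int\bar w^2$ on a slightly enlarged shell, and then apply the hypothesis (\ref{hyp}) to transfer that mass to the inner shell $(1-3a)\eps_0<r<(1-\frac{4a}{3})\eps_0$, which sits inside the region where $\eta\equiv 1$. Crucially, the maximum of the Carleman weight on the outer transition occurs at $r=(1-\frac{3a}{2})\eps_0$, which is exactly the minimum of the weight over the LHS integration region; therefore the exponential factors match, and absorption into the LHS succeeds provided $\tau$ is chosen as a fixed constant large enough so that $C\tau^{3}\eps_{0}^{\eps}$ beats the constant $C_3$ from (\ref{hyp}). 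For the inner piece, using $r^4\lesssim\eps_1^4$, $|\triangle\eta|^2\lesssim\eps_1^{-4}$, and Caccioppoli on $|\nabla\bar w|^2$, I obtain $I_{\mathrm{inner}}\lesssim e^{2\tau\phi(\eps_1/2)}\eps_1^{2}M_{*}^{2}$ where $M_{*}=\max_{r\leq 2\eps_1}|\bar w|$.

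Combining the two estimates yields $\int_{\eps_1<r<(1-\frac{3a}{2})\eps_0}\bar w^2\lesssim e^{2\tau[\phi(\eps_1/2)-\phi((1-\frac{3a}{2})\eps_0)]}\eps_1^{2}M_{*}^{2}$; since $\phi(r)=-\ln r+r^{\eps}$, the difference of $\phi$'s is $\ln(C\eps_0/\eps_1)$ up to a bounded term, producing the factor $(\eps_0/\eps_1)^{2\tau}$. To finish, I decompose $V=\int_{\mathbb B(0,(1-\frac{4a}{3})\eps_0)}\bar w^2$ into the inner disk $\{r<\eps_1\}$, the middle shell $\{\eps_1<r<(1-\frac{3a}{2})\eps_0\}$, and the small leftover shell $\{(1-\frac{3a}{2})\eps_0<r<(1-\frac{4a}{3})\eps_0\}$; the first piece is bounded by $\pi\eps_1^{2}M_{*}^{2}$ and the third by $C_3$ times the middle (again by (\ref{hyp})), so either the conclusion holds trivially because $M_{*}^{2}\gtrsim V/\eps_1^{2}$, or the middle shell carries a definite fraction of $V$. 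In the latter case one reads off $M_{*}^{2}\gtrsim(\eps_1/\eps_0)^{2\tau}V/\eps_0^{2}$, which is the desired estimate with $C_{5}=\tau+O(1)$ (a relabeling $\eps_1\to\eps_1/2$ converts $M_{*}$ to $\max_{r\leq\eps_1}|\bar w|$). The main obstacle I anticipate is the weight bookkeeping: one has to verify that the minimum of $e^{2\tau\phi}$ on the $\{\eta=1\}$ region matches the maximum of $e^{2\tau\phi}$ on the outer transition so that the outer term can be absorbed purely by the polynomial factor $\tau^{3}$, rather than requiring a spurious exponential gain.
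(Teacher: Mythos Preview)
Your strategy matches the paper's (Carleman estimate from Lemma~\ref{carl} applied to a radial cutoff, absorb the outer boundary term using the hypothesis~(\ref{hyp}), bound the inner boundary term by $\max|\bar w|$), but the cutoff is mis-specified and the absorption step fails as written. You claim the shell $(1-3a)\eps_0<r<(1-\tfrac{4a}{3})\eps_0$ from (\ref{hyp}) ``sits inside the region where $\eta\equiv 1$''. It does not: since $\tfrac{4a}{3}<\tfrac{3a}{2}$ one has $(1-\tfrac{4a}{3})\eps_0>(1-\tfrac{3a}{2})\eps_0$, so the outer part of that shell lies in your transition zone, and you cannot dominate it by a piece of the Carleman left-hand side. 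The same arithmetic bites in your endgame: the ``leftover shell'' $(1-\tfrac{3a}{2})\eps_0<r<(1-\tfrac{4a}{3})\eps_0$ is bounded by (\ref{hyp}) only in terms of the shell $(1-3a)\eps_0<r<(1-\tfrac{4a}{3})\eps_0$, which \emph{contains} the leftover shell itself; since $C_3$ is large, this gives no useful inequality. There is also a Caccioppoli issue: if your outer transition already fills $[(1-\tfrac{3a}{2})\eps_0,(1-a)\eps_0]$, enlarging it to control $|\nabla\bar w|^2$ pushes you beyond $(1-a)\eps_0$, outside the scope of (\ref{hyp}).

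The fix is exactly what the paper does: put the outer transition in a strictly narrower shell, e.g.\ $(1-\tfrac{10a}{9})\eps_0<r<(1-\tfrac{11a}{10})\eps_0$, with $\theta\equiv 1$ out to $r=(1-\tfrac{10a}{9})\eps_0$. Then elliptic estimates enlarge the transition precisely to the big shell $(1-\tfrac{3a}{2})\eps_0<r<(1-a)\eps_0$ of (\ref{hyp}), while the small shell $(1-3a)\eps_0<r<(1-\tfrac{4a}{3})\eps_0$ now lies inside $\{\theta\equiv 1\}$ because $(1-\tfrac{4a}{3})\eps_0<(1-\tfrac{10a}{9})\eps_0$. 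With this correction your absorption goes through for a fixed large $\tau$, exactly as you anticipate. The paper's conclusion is also cleaner than your case split: after absorption one adds $e^{2\tau\phi((1-\frac{10a}{9})\eps_0)}\int_{r<3\eps_1/4}\bar w^2$ to both sides and uses monotonicity of $\phi$ to get directly
\[
\int_{r<\eps_1}\bar w^2 \;\geq\; C\Big(\frac{\eps_1}{\eps_0}\Big)^{C}\int_{r<(1-\frac{4a}{3})\eps_0}\bar w^2,
\]
after which $\int_{r<\eps_1}\bar w^2\leq \pi\eps_1^2\max_{r\leq\eps_1}|\bar w|^2$ finishes.
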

\begin{proof}
We select a radial cut-off function $\theta\in
C^\infty_0(\frac{\eps_1}{2}<r<(1-\frac{11a}{10})\eps_0)$ satisfies the properties:\\
(i) $\theta(r)=1$ for
$\frac{3\eps_1}{4}<r<(1-\frac{10a}{9})\eps_0$. \medskip\\
(ii) $|\nabla\theta|+|\triangle \theta|\leq C_6 $ for
$r>(1-\frac{10a}{9})\eps_0$. \medskip\\
(iii) $|\nabla \theta|\leq C_7\eps_1^{-1}$, $|\triangle
\theta|<C_8\eps_1^{-2}$ for $r\leq \frac{3\eps_1}{4}$. \\
 From the
equation (\ref{lll}), we get
$$\triangle (\theta \bar w)+{b}^\ast\cdot \nabla (\theta \bar w)+{q}^\ast\theta \bar w=\triangle \theta \bar w+2\nabla \theta \cdot\nabla \bar  w+
{b}^\ast\cdot\nabla \theta \bar w.$$ Assume that $\tau>C$ is large
enough. Substituting $u=\theta \bar w$ in lemma \ref{carl} yields
that
\begin{equation}
C_2 \tau^3\int r^{\eps} e^{2\tau\phi(r)} \theta^2 \bar w^2  \,drd\omega\leq I,
\label{nea}
\end{equation}
where
$$I=\int r^4  e^{2\tau\phi(r)}|\triangle \theta \bar w+2\nabla \theta \cdot\nabla \bar w+
{b}^\ast\cdot\nabla \theta \bar w|^2\,drd\omega.$$ Note that
$\phi(r)$ is a decreasing function.  Furthermore, by the assumptions
of $\theta(z)$, we obtain
\begin{eqnarray}
I&\leq&
e^{2\tau\phi(\frac{\eps_1}{2})}\int_{\frac{\eps_1}{2}<r<\frac{3\eps_1}{4}}|\triangle
\theta \bar w+2\nabla \theta \cdot\nabla \bar w+ {b}^\ast\cdot\nabla
\theta \bar
w|^2r\,drd\omega  \nonumber \\
&+&
e^{2\tau\phi((1-\frac{10a}{9})\eps_0)}\int_{(1-\frac{10a}{9})\eps_0<r<(1-\frac{11a}{10})\eps_0}|\triangle
\theta \bar w+2\nabla \theta \cdot\nabla \bar w+ {b}^\ast\cdot\nabla
\theta \bar w|^2r\,drd\omega. \nonumber
\end{eqnarray}
By standard elliptic estimates, we derive that
\begin{equation}
I\leq
C_9e^{2\tau\phi(\frac{\eps_1}{2})}\int_{\frac{\eps_1}{4}<r<{\eps_1}}\bar
w^2 r\,drd\omega+
C_{10}e^{2\tau\phi((1-\frac{10a}{9})\eps_0)}\int_{(1-\frac{3a}{2})\eps_0<r<(1-a)\eps_0}
\bar w^2 r\,drd\omega.
\end{equation}
Taking the inequality (\ref{nea}) and assumptions of $\theta$ into
account, we have
\begin{eqnarray}
&C_{10}&e^{2\tau\phi((1-\frac{10a}{9})\eps_0)}\int_{(1-\frac{3a}{2})\eps_0<r<(1-a)\eps_0}\bar
w^2
r\,drd\omega+C_9e^{2\tau\phi(\frac{\eps_1}{2})}\int_{\frac{\eps_1}{4}<r<{\eps_1}}\bar
w^2 r\,drd\omega \nonumber \\ &\geq& C_2
\tau^3\int_{\frac{3\eps_1}{4}<r<(1-\frac{10a}{9})\eps_0} r^{\eps}
e^{2\tau\phi(r)} \bar w^2\,drd\omega \nonumber \\
 &\geq& C_2
\tau^3\big((1-\frac{10a}{9})\eps_0\big)^{\eps-1}
\int_{\frac{3\eps_1}{4}<r<(1-\frac{10a}{9})\eps_0}e^{2\tau\phi(r)}\bar
w^2r\,drd\omega. \label{nnn}
\end{eqnarray}
Since $\eps, \eps_0$ are fixed positive constants,
Taking $\tau$ large enough, we obtain
$$ \frac{1}{2}C_2
\tau^3\big((1-\frac{10a}{9})\eps_0\big)^{\eps-1}>C_{10}.    $$
Taking the hypothesis (\ref{hyp}) into consideration, we can
incorporate the first term in the left hand side of inequality (\ref{nnn})
into the right hand side. It follows that
\begin{equation}
C_9e^{2\tau\phi(\frac{\eps_1}{2})}\int_{\frac{\eps_1}{4}<r<{\eps_1}}\bar
w^2 r\,drd\omega\geq C_{10}
e^{2\tau\phi((1-\frac{10a}{9})\eps_0)}\int_{\frac{3\eps_1}{4}<r<(1-\frac{10a}{9})\eps_0}\bar
w^2r\,drd\omega.
\end{equation}
Fix such $\tau$, adding the following term
$$
e^{2\tau\phi((1-\frac{10a}{9})\eps_0)}\int_{r<\frac{3\eps_1}{4}}\bar
w^2r\,drd\omega
$$
to both sides of last inequality yields that
\begin{equation}
e^{2\tau\phi(\frac{\eps_1}{2})}\int_{r<{\eps_1}}\bar w^2
r\,drd\omega\geq C_{11}
e^{2\tau\phi((1-\frac{10a}{9})\eps_0)}\int_{r<(1-\frac{4a}{3})\eps_0}\bar
w^2r\,drd\omega,
\end{equation}
where we have used the fact that $\phi$ is decreasing.
Straightforward calculations show that
$$ e^{2\tau\big(\phi((1-\frac{10a}{9})\eps_0)-\phi(\frac{\eps_{1}}{2})\big)}  \geq C_{13} (\frac{\eps_1}{\eps_0})^{C_{12}}.  $$
Thus,
\begin{equation}
\int_{r<{\eps_1}}\bar w^2 r\,drd\omega\geq
C_{13}(\frac{\eps_1}{\eps_0})^{C_{12}}\int_{r<(1-\frac{4a}{3})\eps_0}\bar
w^2r\,drd\omega.
\end{equation}
This completes the lemma.
\end{proof}

Our next goal is to find the relation of lemma \ref{prof} and nodal length.
We assume that  the estimate (\ref{hyp}) exists. Then the conclusion
(\ref{god}) in lemma \ref{prof} holds. For $\eps_1\leq
\frac{\eps}{100}$, if $|z|<\eps_1$, using Taylor's expansion gives
that
$$ |\bar w(z)-\sum_{|\alpha|\leq C_{5}}\frac{1}{\alpha !}\frac{\partial^{\alpha} \bar w}{\partial z^{\alpha}}(0)z^\alpha
|\leq \sup_{|z|\leq
\eps_1}\sup_{|\alpha|=C_{5}+1}d_1|\frac{\partial^{\alpha} \bar
w}{\partial z^{\alpha}}(z)|{\eps_1}^{C_{5}+1}, $$ where
$\alpha=(\alpha_1, \alpha_2)$ and $\frac{\partial}{\partial
z^\alpha}=\frac{\partial}{\partial
{z_1}^{\alpha_1}}\frac{\partial}{\partial {z_2}^{\alpha_2}}$. To
control the right hand side of the last inequality, by elliptic
estimates and rescaling argument, we have
$$  |\bar w(z)-\sum_{|\alpha|\leq C_{5}}\frac{1}{\alpha !}\frac{\partial^{\alpha} \bar w}{\partial z^{\alpha}}(0)z^\alpha
|\leq d_2(\dashint_{\mathbb B(0, (1-\frac{4}{3}a)\eps_0)}\bar
w^2)^{1/2}(\frac{\eps_1}{\eps_0})^{C_{5}+1}.
$$
Using the estimate (\ref{god}) in  lemma \ref{prof}, we get
$$ |\bar w(z)-\sum_{|\alpha|\leq C_{5}}\frac{1}{\alpha !}\frac{\partial^{\alpha} \bar w}{\partial z^{\alpha}}(0)z^\alpha
|\leq d_3(\frac{\eps_1}{\eps_0})  \max_{|z|\leq \eps_1}|\bar w|.
$$
Choosing ${\eps_1}/{\eps_0}$ sufficiently small, by the triangle
inequality, we obtain
$$\sup_{|\alpha|\leq C_{5}}\mid\frac{\partial^{\alpha} \bar w}{\partial
z^{\alpha}}(0)\mid{\eps_1}^{|\alpha|}\geq d_4 \max_{|z|\leq
\eps_1}|\bar w|.$$ Applying again the estimate (\ref{god}) to the
right hand side of the last inequality yields that
\begin{equation}
\sup_{|\alpha|\leq C_{5}} \mid \frac{\partial^{\alpha} \bar w}{\partial
z^{\alpha}}(0)\mid{\eps_0}^{|\alpha|}\geq d_5 \big(\dashint_{\mathbb B(0,
(1-\frac{4}{3}a)\eps_0)}\bar w^2\big)^{1/2}. \label{stan}
\end{equation}
By standard elliptic estimates, we also have \begin{equation}
\sup_{|z|\leq \frac{\eps_0}{2}}\sup_{|\alpha|\leq C_{5}+1}
\mid \frac{\partial^{\alpha} \bar w}{\partial
z^{\alpha}}(z)\mid {\eps_0}^{|\alpha|}\leq d_6 \big(\dashint_{\mathbb B(0,
(1-\frac{4}{3}a)\eps_0)}\bar w^2\big)^{1/2}. \label{stan1}
\end{equation}

The basic relationship between derivatives and nodal length in two
dimensions is shown in \cite{DF2}.
\begin{lemma}
Suppose that $\bar w$ satisfies (\ref{stan}) and (\ref{stan1}). Then
$$H^1(z| \, |z|\leq d_7\bar\eps \ \mbox{and} \ \bar w(z)=0)\leq d_8\bar\eps.  $$
\end{lemma}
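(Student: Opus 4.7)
The plan is to approximate $\bar w$ on the small ball $\{|z|\leq d_7\bar\eps\}$ by its Taylor polynomial at the origin, extract a nonvanishing directional derivative of some order $k_0\leq C_5$ from (\ref{stan}), use (\ref{stan1}) to propagate this nonvanishing to the whole ball, and then combine iterated Rolle's theorem along lines with Crofton's integral-geometric formula to bound $H^1$ of the nodal set.

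Concretely, (\ref{stan}) provides a multi-index $\alpha_0$ with $k_0:=|\alpha_0|\leq C_5$ and $|\partial^{\alpha_0}\bar w(0)|\,\eps_0^{k_0}\gtrsim M$, where $M$ denotes the $L^2$ average of $\bar w$ over $B(0,(1-\tfrac{4a}{3})\eps_0)$. After rotating coordinates (an operation that preserves $H^1$), I assume $\alpha_0=(k_0,0)$. Applying (\ref{stan1}) with $|\alpha|=k_0+1$ bounds the Lipschitz constant of $\partial_x^{k_0}\bar w$ on $B(0,\eps_0/2)$, and since $\bar\eps\leq \eps_0/100$, choosing $d_7$ sufficiently small yields $|\partial_x^{k_0}\bar w(z)|\geq \tfrac{1}{2}|\partial_x^{k_0}\bar w(0)|>0$ throughout $|z|\leq d_7\bar\eps$. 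Iterated Rolle's theorem then shows that on each horizontal segment $\{(x,t):|x|\leq d_7\bar\eps\}$ with $|t|\leq d_7\bar\eps$, the one-variable function $x\mapsto \bar w(x,t)$ has at most $k_0\leq C_5$ zeros. The same argument applies for lines of any direction $\theta$ such that $D_\theta^{k_0}\bar w(0)\neq 0$; the exceptional set of bad $\theta$ is finite because $\theta\mapsto D_\theta^{k_0}\bar w(0)$ is a trigonometric polynomial in $(\cos\theta,\sin\theta)$ of degree $k_0$ that is nonzero at $\theta=0$.

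Away from the finite singular set identified in Section 4 (which has vanishing $H^1$), $Z:=\{\bar w=0\}\cap B(0,d_7\bar\eps)$ is a $C^1$ one-manifold, so Crofton's formula in the plane gives
$$H^1(Z)=\tfrac{1}{2}\int_0^\pi\int_{\mathbb R}\#(Z\cap\ell_{\theta,t})\,dt\,d\theta.$$
Outside the finite set of bad directions the inner count is at most $C_5$, and the admissible $t$-range has length at most $2d_7\bar\eps$, giving $H^1(Z)\leq \pi C_5 d_7\bar\eps\leq d_8\bar\eps$. The main obstacle will be the uniform Rolle bound as $\theta$ approaches a bad direction: there $|D_\theta^{k_0}\bar w(0)|$ degenerates and the safe radius on which $D_\theta^{k_0}\bar w$ remains nonzero shrinks below $d_7\bar\eps$, so the per-line count could a priori blow up on the bad arc. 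I would remedy this either by checking that the Lebesgue measure of a small arc of bad directions in $\theta$ can be made arbitrarily small while $\#(Z\cap\ell_{\theta,t})$ stays bounded (by switching, on that arc, to a different multi-index $\alpha$ from the Taylor expansion whose directional derivative is nondegenerate there) or by appealing directly to the two-dimensional zero-counting lemma of Donnelly--Fefferman in \cite{DF2}, which is precisely of this Bernstein type.
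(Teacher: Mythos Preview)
The paper does not actually prove this lemma; it simply records it as ``the basic relationship between derivatives and nodal length in two dimensions'' and cites \cite{DF2}. So your second remedy---appealing directly to the Donnelly--Fefferman lemma---is literally the paper's proof.

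Your sketch (extract a large directional derivative from (\ref{stan}), propagate it over the small ball via (\ref{stan1}), apply iterated Rolle along lines, integrate via Crofton) is the right strategy and is essentially the idea behind the corresponding result in \cite{DF2}. The gap you flag, however, is real and your first proposed remedy does not close it as stated. Having $|\partial_x^{k_0}\bar w|$ bounded below on the ball gives the $\leq k_0$ zero count only for lines whose direction lies in a \emph{fixed} arc around $\theta=0$ (those $\theta$ for which $D_\theta^{k_0}\bar w$ remains comparable to $\partial_x^{k_0}\bar w$, via the upper bounds in (\ref{stan1})). Outside that arc you propose to ``switch to a different multi-index whose directional derivative is nondegenerate there,'' but (\ref{stan}) supplies only \emph{one} large partial at the origin; if $\bar w$ looks like $x^{k_0}$ plus lower-order corrections, no pure $y$-derivative of order $\leq C_5$ is large at $0$, so there is nothing to switch to for directions near $\theta=\pi/2$. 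The actual argument in \cite{DF2} needs an additional step beyond this naive direction-by-direction scheme to handle the degeneration uniformly; that is precisely the content the paper is outsourcing by citing \cite{DF2}.
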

With aid of the last lemma, we can readily obtain an upper nodal
length estimate.

\begin{proposition}
Let $\bar w$ be the solution of (\ref{lll}). Suppose that $\bar
\eps\leq \eps_0$ and $w$ satisfies the growth condition
\begin{equation}
\int_{(1-\frac{3a}{2})\bar \eps<r<(1-{a})\bar \eps}\bar w^2\leq C_3
\int_{(1-{3a})\bar \eps<r<(1-\frac{4a}{3})\bar \eps} \bar w^2.
\label{aid}
\end{equation}
Then
$$H^1(z|\, |z|\leq d_9\bar\eps \ \mbox{and} \ \bar w(z)=0)\leq d_{10}\bar\eps.  $$
\label{leng}
\end{proposition}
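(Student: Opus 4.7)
The plan is to reduce Proposition~\ref{leng} to the case $\bar\eps=\eps_0$ already handled in Lemma~\ref{prof} and the subsequent derivative-nodal length lemma by a straightforward rescaling argument. First I would introduce the dilated function $\tilde w(z)=\bar w\bigl(\tfrac{\bar\eps}{\eps_0}z\bigr)$, which is well-defined on $\mathbb B(0,4\eps_0/\bar\eps)\supset\mathbb B(0,4)$ since $\bar\eps\leq\eps_0$. A direct chain-rule computation shows that $\tilde w$ solves an elliptic equation of the same form~(\ref{lll}) with coefficients $\tilde b(z)=\tfrac{\bar\eps}{\eps_0}\,b^{\ast}\bigl(\tfrac{\bar\eps}{\eps_0}z\bigr)$ and $\tilde q(z)=\bigl(\tfrac{\bar\eps}{\eps_0}\bigr)^{2}q^{\ast}\bigl(\tfrac{\bar\eps}{\eps_0}z\bigr)$; since $\bar\eps/\eps_0\leq 1$, the $W^{1,\infty}$ norms of $\tilde b,\tilde q$ are dominated by those of $b^{\ast},q^{\ast}$, so $\tilde w$ is an admissible solution for all previous results.

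Next I would verify that the growth hypothesis~(\ref{aid}) on $\bar w$ at scale $\bar\eps$ translates, via the change of variable $y=\tfrac{\bar\eps}{\eps_0}z$, into the hypothesis~(\ref{hyp}) of Lemma~\ref{prof} for $\tilde w$ at scale $\eps_0$: the common Jacobian $(\bar\eps/\eps_0)^{2}$ cancels out of the ratio of the two integrals. Lemma~\ref{prof} then supplies the lower bound~(\ref{god}) for $\tilde w$. Combining this with a Taylor expansion at the origin and standard interior elliptic estimates, exactly as in the paragraph immediately preceding the statement of Proposition~\ref{leng}, yields the estimates~(\ref{stan}) and~(\ref{stan1}) for $\tilde w$.

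Plugging these bounds into the derivative-nodal length lemma quoted just before Proposition~\ref{leng} gives
$$H^{1}\bigl(\{\,|z|\leq d_{7}\eps_{0},\ \tilde w(z)=0\,\}\bigr)\leq d_{8}\eps_{0}.$$
Finally I would undo the dilation: the map $z\mapsto y=\tfrac{\bar\eps}{\eps_0}z$ is a similarity of ratio $\bar\eps/\eps_{0}$ that sends the nodal set of $\tilde w$ bijectively onto that of $\bar w$ and contracts one-dimensional Hausdorff measure by the same factor, so the above estimate becomes
$$H^{1}\bigl(\{\,|y|\leq d_{7}\bar\eps,\ \bar w(y)=0\,\}\bigr)\leq d_{8}\bar\eps,$$
which is the claimed bound with $d_{9}=d_{7}$ and $d_{10}=d_{8}$.

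The only subtle point is uniformity of constants: each of $C_{4},C_{5},d_{5},d_{6},d_{7},d_{8}$ comes from a lemma whose hypotheses involve only the $W^{1,\infty}$ bound on the coefficients, and since $\tilde b,\tilde q$ inherit such a bound independently of $\bar\eps$, none of these constants degenerates as $\bar\eps\to 0$. Beyond this bookkeeping I do not anticipate any genuine obstacle; the proof is essentially a scale-invariant repackaging of Lemma~\ref{prof} and the derivative-nodal length lemma.
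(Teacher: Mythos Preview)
Your proposal is correct and matches the paper's approach: the paper's proof of Proposition~\ref{leng} is a one-line remark that (\ref{stan}) and (\ref{stan1}) follow from (\ref{aid}) via Lemma~\ref{prof}, after which the derivative--nodal length lemma finishes the job. Your rescaling argument is precisely the mechanism that makes this terse sentence work, and your check that the $W^{1,\infty}$ bounds on $\tilde b,\tilde q$ (and hence all the constants) are uniform in $\bar\eps$ is the right point to flag.
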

\begin{proof}
Since the inequalities (\ref{stan}) and (\ref{stan1}) can be derived from (\ref{aid}) by lemma \ref{prof}, the proposition follows
from last lemma.

\end{proof}

\section{Total Nodal Length}
As the proposition \ref{proh} indicates that the eigenfunctions can
not grow rapidly on too many small balls. If it grows slowly, we
have an upper bound on the local length of nodal sets by proposition \ref{leng}. In this
section, we will link these two arguments together. To achieve it,
we will employ a process of repeated subdivision and selection
squares. The idea is inspired by \cite{DF2}.

Assume that $\mathbb B(p, c\lambda^{-\frac{1}{2}})$ is a geodesic
ball of double manifold $\overline{\mathcal{M}}$. Choosing $c$ to be
small, then it is contained in a conformal chart. Let $w(z)= v(c
{\lambda^{-\frac{1}{2}}}z)$ with $c$ sufficiently small. We know
that $w$ satisfies
\begin{equation}
\triangle w+\hat{{b}}(x)\cdot\nabla w+\hat{{q}}(x)w=0 \quad
\mbox{in}\ \mathbb B( 0, 4).
\end{equation}

 We consider the square $P=\{ (x,
y)| \max(|x|, |y|)\leq \frac{1}{60}\}$ in $\mathbb B (0, 4)$ and
divide it into a grid of closed square $P_l$ with side $ \delta\leq
a_1\lambda^{-1}$. If (\ref{rap}) holds for some point $z_l\in P_l$
and for some sufficiently large $C_{1}$. We call $P_l$ as a square
of rapid growth. With aid of proposition \ref{proh}, we are able to obtain the following result.

\begin{lemma}
There are at most $C\lambda^2$ squares with side $ \delta$ where
$w$ is of rapid growth. \label{fina}
\end{lemma}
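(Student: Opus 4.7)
The strategy is a pigeonhole coloring argument that reduces the count to Proposition~\ref{proh}. Note first that the total number of grid squares in $P$ is of order $\delta^{-2}$, which can far exceed $\lambda^2$ when $\delta\ll \lambda^{-1}$, so the content of the lemma is that only an $O(\lambda^2 \delta^2)$-fraction of grid squares can exhibit rapid growth. The bound cannot come from trivially counting all squares; it must exploit Proposition~\ref{proh}.

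The first step is to attach a disk to each rapid-growth square. For every rapid-growth square $P_l$, choose a witness point $z_l \in P_l$ at which the defining inequality~(\ref{rap}) holds, and associate to $P_l$ the disk $D_l=\{|z-z_l|\le \delta\}$. These disks all lie in the ball of radius $\tfrac{1}{30}$ about the origin (since $P\subset[-\tfrac{1}{60},\tfrac{1}{60}]^2$ and $\delta$ is much smaller than $\tfrac{1}{60}$), and hypothesis $(R_1)$ of Proposition~\ref{proh} is automatic once we require $a_1\le d_1$ in the definition of the grid.

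The second step is to decompose the rapid-growth squares into well-separated color classes. Set $K=\lceil d_2\lambda^{1/2}\rceil+2$ and color each grid square by the pair of residues modulo $K$ of its grid coordinates. This produces $K^2=O(\lambda)$ color classes. If two rapid-growth squares $P_l\neq P_k$ share a color, then their indices differ by a multiple of $K$ in each coordinate, so the Euclidean distance between their centers is at least $K\delta$; since each witness $z_l$ lies within $\tfrac{\sqrt{2}}{2}\delta$ of the center of $P_l$, we obtain
\[
|z_l-z_k|\ge K\delta-\sqrt{2}\,\delta\;>\;d_2\lambda^{1/2}\delta,
\]
which is precisely hypothesis $(R_2)$ of Proposition~\ref{proh}. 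Thus the collection of disks $\{D_l\}$ in a single color class satisfies both $(R_1)$ and $(R_2)$, and the rapid-growth condition~(\ref{rap}) holds at every $z_l$ by construction.

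Applying Proposition~\ref{proh} to each color class bounds the number of rapid-growth squares in that class by $d_3\lambda$. Summing over the $K^2=O(\lambda)$ color classes yields
\[
\#\{\text{rapid-growth squares}\}\;\le\;K^2\cdot d_3\lambda\;\le\;C\lambda^2,
\]
as desired, with $C$ independent of $\delta$. The only delicate point is ensuring that $(R_2)$ survives the freedom in the choice of witness $z_l$ inside $P_l$; this is precisely why one must pad the modulus $K$ by a fixed constant beyond the minimal $d_2\lambda^{1/2}$. Everything else is bookkeeping, and no new analytic input beyond Proposition~\ref{proh} is needed.
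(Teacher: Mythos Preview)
Your proof is correct, and the overall strategy---reduce to Proposition~\ref{proh} after arranging the separation hypothesis $(R_2)$---matches the paper's. The difference lies in how the separation is manufactured. The paper uses a Vitali-type selection: from the witness points $z_l$ it forms enlarged disks $P_l^\ast=\{|z-z_l|<d_1\delta\lambda^{1/2}\}$, extracts a maximal pairwise-disjoint subfamily $I_2$, applies Proposition~\ref{proh} once to that subfamily to obtain $|I_2|\le d_3\lambda$, and then recovers the full count by an area comparison (the quadrupled disks $P_l^{\ast\ast}$ for $l\in I_2$ cover all the rapid-growth squares, forcing $|I_1|\lesssim\lambda|I_2|$). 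Your coloring argument instead partitions the grid into $K^2=O(\lambda)$ residue classes modulo $K\sim\lambda^{1/2}$, so that $(R_2)$ holds automatically within each class, applies the proposition to each class, and sums. Both are standard combinatorial devices yielding the same bound; the Vitali route invokes Proposition~\ref{proh} only once but needs the covering/area step, while yours invokes it $O(\lambda)$ times but avoids any covering argument. Neither has an advantage in strength or in the resulting constant.
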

\begin{proof}
Let $I_1$ be the collection of those indices $l$ for which $P_l$ is
a square of rapid growth. For each $l\in I_1$, there exists some
point $z_l\in P_l$ such that (\ref{rap}) holds.  Let $|I_1|$ denote
the cardinality of $I_1$. Define
$$ P_l^{\ast}=\{z|\, |z-z_l|<d_1\delta  \lambda^{\frac{1}{2}}\}.  $$
The collection of disks $P_l^{\ast}$ covers the collection of square
$P_l$ for $l\in I_1$. We choose a maximal collection of disjoint
disks of $P_l^{\ast}$ and denote it as $I_2$. If $l\in I_2$, we
define
$$ P_l^{\ast\ast}=\{z| \,|z-z_l|<4d_1\delta  \lambda^{\frac{1}{2}}\}.  $$
Since the collection of disks in $I_2$ are disjoint and maximal, we
obtain that
$$ \bigcup_{l\in I_2}P_l^{\ast\ast}\supseteq \bigcup_{l\in I_1}P_l^{\ast} \supseteq \bigcup_{l\in I_1}P_l.      $$
Thus,
$$|I_2|\times 16d_1^2\delta^2\lambda\geq |I_1|\delta^2,     $$
which implies
$$|I_2|\lambda\geq d_2 |I_1|. $$
Recall from proposition \ref{proh} that $|I_2|\leq d_3\lambda$.
Therefore, we obtain the desirable estimate $|I_1|\leq
d_4\lambda^2$.

\end{proof}

Now we introduce a iterative process of bisecting squares. We begin
by dividing the square into a grid of square $P_l(1)$ with side
$\delta(1)=a_1\lambda^{-1}$, then separate them into two categories
$R_l(1)$ and $S_l(1)$. $R_l(1)$ are those where $w$ is of rapid
growth and $S_l(1)$ are those where (\ref{rap}) fails for $w$. We
continue to bisect each square $R_l(1)$ to obtain square $P_{l}(2)$
with side $\delta(2)=\frac{\delta(1)}{2}$. Again, we split $P_i(2)$
into the subcollection $R_l(2)$ with rapid growth and $S_l(2)$ with
slow growth. We repeat the process at each step $k$. Then there are
squares $R_l(k)$ and $S_l(k)$ with
$\delta(k)=\frac{\delta(1)}{2^k}$. We count the number of $R_l(k)$
and $S_l(k)$ at step $k$.
\begin{lemma}
(i) The number of squares $R_l(k)$ is at most $C_2\lambda^2$.\\
(ii) The number of squares $S_l(k)$ is at most $C_3\lambda^2$.
\label{mei}
\end{lemma}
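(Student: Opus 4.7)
The plan is to establish (i) by a direct transfer of the packing argument used in the proof of Lemma \ref{fina}, and then to deduce (ii) from (i) by a simple parent--child counting.

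For (i), the key observation is that at step $k$ the family $\{R_l(k)\}$ consists of pairwise disjoint axis-aligned squares of common side $\delta(k)=\delta(1)/2^k\le a_1\lambda^{-1}$, each of which contains a point $z_l$ where the rapid-growth condition (\ref{rap}) holds. These are exactly the two ingredients that the proof of Lemma \ref{fina} actually uses; nowhere does that argument require $\{R_l(k)\}$ to form the full initial grid. I would therefore replay the argument verbatim at scale $\delta(k)$: attach to each $R_l(k)$ the disk $R_l^\ast(k)=\{\,|z-z_l|<d_1\delta(k)\lambda^{1/2}\,\}$, extract a maximal disjoint subfamily indexed by $I_2$, and expand these by a factor of four to $R_l^{\ast\ast}(k)$ which cover $\bigcup_l R_l^\ast(k)\supseteq \bigcup_l R_l(k)$. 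Hypothesis $(R_1)$ of Proposition \ref{proh} reads $\delta(k)<d_1\lambda^{-1}$, which is satisfied since $\delta(k)\le a_1\lambda^{-1}$, so $|I_2|\le d_3\lambda$. A volume comparison
$$|I_2|\cdot 16 d_1^2 \delta(k)^2 \lambda \;\ge\; |\{R_l(k)\}|\cdot \delta(k)^2$$
then yields $|\{R_l(k)\}|\le C_2\lambda^2$ with $C_2$ independent of $k$.

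For (ii), I would exploit the parent--child relation built into the bisection. For $k\ge 2$ every square $P_l(k)$ is obtained by bisecting a square $R_j(k-1)$ into four congruent subsquares, so
$$|\{P_l(k)\}| \;=\; 4\,|\{R_l(k-1)\}| \;\le\; 4 C_2 \lambda^2$$
by part (i) applied at step $k-1$; since $S_l(k)\subset P_l(k)$ we obtain $|\{S_l(k)\}|\le 4 C_2 \lambda^2$. For $k=1$ the total count of squares of side $\delta(1)=a_1\lambda^{-1}$ in the square $P$ of side $1/30$ is at most $(30 a_1)^{-2}\lambda^2$, which gives the same type of bound.

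The main point to verify --- and the only real conceptual obstacle --- is that the packing argument of Lemma \ref{fina} relies only on pairwise disjointness and common side length, not on the squares tiling all of $P$. Once this is confirmed, both bounds follow by the routine bookkeeping above.
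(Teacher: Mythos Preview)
Your proposal is correct and follows essentially the same approach as the paper. The paper's proof simply says that (i) ``follows directly from Lemma \ref{fina}'' and then deduces (ii) via the parent--child inequality $|S_l(k)|\le 4|R_l(k-1)|$ for $k\ge 2$ together with the trivial count at $k=1$; your write-up is a more explicit version of the same argument, spelling out why the packing step behind Lemma \ref{fina} only needs disjointness and a common side length rather than a full tiling of $P$.
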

\begin{proof}
The conclusion (i) follows directly from the lemma \ref{fina}. We
only need to show (ii). If $k=1$, the conclusion (ii) follows
because the total number of squares is at most the order of
$\lambda^2$. If $k\geq 2$, by construction of those squares,
$$ |S_l(k)|\leq 4|R_l(k-1)|\leq C_4\lambda^2,        $$
where we have used (i) in the last inequality. The lemma is done.
\end{proof}
Next lemma tells that almost every point lies in some $R_l(k)$ with
slow growth. It is the lemma 6.3 in \cite{DF2}.
\begin{lemma}
 $\bigcup_{k,l} S_l(k)$ covers the square $P$ except for singular points $\mathcal {S}=\{z\in P| w(x)=0, \nabla w=0\}$.
 \label{singular}
\end{lemma}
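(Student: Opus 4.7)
The plan is to argue by contradiction. Suppose $z_0 \in P \setminus \mathcal{S}$ lies in none of the slowly growing squares $S_l(k)$. By construction, at every step $k$ the square $P_{l(k)}(k)$ containing $z_0$ must then be rapidly growing, so there exists $z_{l(k)} \in P_{l(k)}(k)$ at which condition (\ref{rap}) holds with $\delta = \delta(k) = a_1\lambda^{-1}2^{1-k}$. Since $\delta(k) \to 0$ we have $z_{l(k)} \to z_0$; moreover the normalized displacements $p_k := (z_{l(k)} - z_0)/\delta(k)$ lie in $\overline{\mathbb{B}(0,\sqrt{2})}$ and, after passing to a subsequence, converge to some $p_0$.

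The core of the argument is a blow-up at $z_0$. Because $z_0 \notin \mathcal{S}$, the Taylor expansion of $w$ at $z_0$ has nonzero leading homogeneous term of order $j \in \{0,1\}$: either $w(z_0) \neq 0$ ($j = 0$) or $w(z_0) = 0$ while $\nabla w(z_0) \neq 0$ ($j = 1$). Define the rescaled functions
$$W_k(\zeta) := \delta(k)^{-j}\, w\bigl(z_{l(k)} + \delta(k)\zeta\bigr).$$
A direct Taylor expansion shows $W_k$ converges uniformly on $\{|\zeta| \leq 2\}$ to a nonzero affine function $F$, with $F \equiv w(z_0)$ when $j = 0$ and $F(\zeta) = \nabla w(z_0) \cdot (\zeta + p_0)$ when $j = 1$. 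Substituting into (\ref{rap}) via the change of variable $z = z_{l(k)} + \delta(k)\zeta$ and cancelling the common factor $\delta(k)^{2j+2}$, the growth condition becomes
$$C_1 \int_{A_1} W_k^2\, d\zeta \;\leq\; \int_{A_2} W_k^2\, d\zeta,$$
where $A_1 = \{(1-3a) \leq |\zeta| \leq (1-3a/4)\}$ and $A_2 = \{(1-3a/2) \leq |\zeta| \leq (1-a)\}$; passing to the limit $k \to \infty$ yields $C_1 \int_{A_1} F^2 \leq \int_{A_2} F^2$. It therefore suffices to produce a universal constant $K(a)$ such that $\int_{A_1} F^2 / \int_{A_2} F^2 \leq K(a)$ for every nonzero affine $F$; fixing $C_1 > K(a)$ in the definition of rapid growth then produces the contradiction.

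The main obstacle, and the only real calculation, is establishing this uniform bound. Writing $F(\zeta) = c_0 + c_1 \cdot \zeta$ and integrating in polar coordinates centered at the origin, the cross term $\int_0^{2\pi} (c_1 \cdot \zeta)\, d\theta$ vanishes by angular symmetry, leaving
$$\int_{\{r_1 \leq |\zeta| \leq r_2\}} F^2\, d\zeta \;=\; \pi c_0^2 (r_2^2 - r_1^2) + \frac{\pi|c_1|^2}{4}(r_2^4 - r_1^4).$$
The ratio therefore takes the form $(A c_0^2 + B|c_1|^2)/(C c_0^2 + D|c_1|^2)$ with positive constants $A, B, C, D$ depending only on $a$, which by the mediant inequality is bounded above by $\max(A/C, B/D)$. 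A short computation shows both quotients are of order $9/2$ for small $a$, giving the desired universal $K(a)$ and closing the proof.
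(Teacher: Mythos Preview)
Your blow-up strategy is correct and is precisely the approach of \cite{DF2}, which the paper merely cites here without reproducing: at a nonsingular point the leading Taylor term of $w$ has order $0$ or $1$, so the rescalings $W_k$ converge uniformly on compacta to a nonzero affine function $F$, and the rapid-growth inequality (\ref{rap}) passes to the limit.

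There is, however, a slip in the final step. The limit inequality reads $C_1 \int_{A_1} F^2 \leq \int_{A_2} F^2$, so a contradiction for large $C_1$ requires an \emph{upper} bound on $\int_{A_2} F^2 / \int_{A_1} F^2$, not on its reciprocal. Your stated bound $\int_{A_1} F^2 / \int_{A_2} F^2 \leq K(a)$ only yields $\int_{A_2} F^2 / \int_{A_1} F^2 \geq 1/K(a)$, which is compatible with $C_1 \leq \int_{A_2} F^2 / \int_{A_1} F^2$ for every $C_1$ and therefore gives no contradiction. The repair is immediate: the same polar-coordinate computation and mediant inequality bound the correct ratio,
\[
\frac{\int_{A_2} F^2}{\int_{A_1} F^2} \;=\; \frac{C c_0^2 + D|c_1|^2}{A c_0^2 + B|c_1|^2} \;\leq\; \max\Bigl(\frac{C}{A},\,\frac{D}{B}\Bigr),
\]
a finite constant depending only on $a$ (here one uses that $\int_{A_1} F^2 > 0$ for every nonzero affine $F$, since such a function vanishes only on a line). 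Choosing $C_1$ larger than this constant then closes the argument.
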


We are ready to give the proof of Theorem \ref{th1}.
\begin{proof}[Proof of Theorem \ref{th1}]
Consider $\bar w(z)= w(z_l+\eps_0^{-1}\delta(k)z)$. Then $\bar w(z)$
satisfies the equation (\ref{lll}). Choosing a finite collection of
$z_l\in S_l(k) $ and applying proposition \ref{leng}, we have
\begin{equation}
H^1(z| \, w(z)=0 \ \mbox{and} \ z\in S_l(k) )\leq C_5 2^{-k}
\lambda^{-1}. \label{final}
\end{equation}
Furthermore, thanks to lemma \ref{singular},
\begin{eqnarray}
H^1(z| \, w(z)=0 \ \mbox{and} \ \max(|x|, |y|)\leq \frac{1}{60}
)&\leq & \sum_{l, k} H^1(z|
\, w(z)=0 \ \mbox{and} \ z\in S_l(k) ) \nonumber \\
&\leq & \lambda^2 \sum_k C_5 2^{-k} \lambda^{-1} \nonumber\\
&\leq & C_6\lambda,
\end{eqnarray}
where we have used (ii) in lemma \ref{mei} and (\ref{final}). Since
 $w(z)=v(c\lambda^{-\frac{1}{2}}z)$, by the rescaling argument, we
 obtain
 $$ H^1( \{v(z)=0\}\cap \mathbb B(p, c\lambda^{-\frac{1}{2}}))\leq C_6\lambda^{\frac{1}{2}}.   $$
 Finally, covering $\overline{\mathcal{M}}$ with order $\lambda$ of geodesic
 balls with radius $c\lambda^{-\frac{1}{2}}$, we readily deduce
 that
$$ H^1(z\in  \overline{\mathcal{M}} |\, v(z)=0) \leq C_7\lambda^{\frac{3}{2}}. $$
So is $H^1(\mathcal{N}_\lambda)$.
\end{proof}

\end{document}